\newcolumntype{M}[1]{>{\centering\arraybackslash}m{#1}}
\newcolumntype{N}{@{}m{0pt}@{}}
\newtheorem{theorem}{Theorem}[section]
\newtheorem{prop}[theorem]{Proposition}
\newtheorem{lemma}[theorem]{Lemma}
\newtheorem{corollary}[theorem]{Corollary}
\newtheorem{remark}[theorem]{Remark}
\def\liminf{\mathop{\rm lim\,inf}\limits}
\def\R{\mathbb{R}}
\def\E{\mathbb{E}}
\def\h{\mathbf{h}}
\def\eps{\varepsilon}
\def\X{\mathbf{X}}
\def\x{\mathbf{x}}
\def\param{\boldsymbol{\theta}}
\def\Param{\boldsymbol{\Theta}}
\definecolor{hancolor}{rgb}{0.1, 0.0, 0.9}
\newcommand{\commHL}[1]{{\textcolor{black}{#1}}}
\DeclareMathOperator{\Out}{\texttt{Out}}
\DeclareMathOperator*{\argmin}{arg\,min}
\newcommand{\xmark}{\text{\ding{55}}}
\newlength\myindent
\newenvironment{customassumption}[1]
{\innercustomassumption}
{\endinnercustomassumption}
\theoremstyle{definition}
\definecolor{mycolor}{rgb}{0, 0, 0}
\newcommand{\revision}[1]{{\textcolor{black}{#1}}}
\newcommand{\addresseshere}{%
	\enddoc@text\let\enddoc@text\relax
}
\begin{document}
	
	\title[Convergence and Complexity of BMM with diminishing radius]{Block majorization-minimization with diminishing radius \\ for constrained  nonsmooth nonconvex optimization}

	\author{Hanbaek Lyu}
	\address{Hanbaek Lyu, Department of Mathematics, University of Wisconsin - Madison, WI 53706, USA}
	\email{\texttt{hlyu@math.wisc.edu}}

	\author{Yuchen Li}
	\address{Yuchen Li, Department of Mathematics, University of Wisconsin - Madison, WI 53706, USA}
	\email{\texttt{yli966@wisc.edu}}

	\thanks{The codes for the main algorithm and simulations are provided in \url{https://github.com/HanbaekLyu/BMM-DR}}

\begin{abstract}
Block majorization-minimization (BMM) is a simple iterative algorithm for constrained nonconvex optimization that sequentially minimizes majorizing surrogates of the objective function in each block while the others are held fixed. BMM entails a large class of optimization algorithms such as block coordinate descent and its proximal-point variant, expectation-minimization, and block projected gradient descent. We first establish that for general constrained nonsmooth nonconvex optimization, BMM with $\rho$-strongly convex \revision{and $L_g$-smooth} surrogates can produce an $\eps$-approximate first-order optimal point within  \revision{$\widetilde{O}((1+L_g+\rho^{-1})\eps^{-2})$} iterations and asymptotically converges to the set of first-order optimal points. Next, we show that BMM combined with trust-region methods with diminishing radius has an improved complexity of \revision{$\widetilde{O}((1+L_g) \eps^{-2})$}, independent of the inverse strong convexity parameter $\rho^{-1}$, allowing improved theoretical and practical performance with `flat' surrogates. Our results hold robustly even when the convex sub-problems are solved as long as the optimality gaps are summable. \revision{Central to our analysis is a novel continuous first-order optimality measure, by which we bound the worst-case sub-optimality in each iteration by the first-order improvement the algorithm makes.} We apply our general framework to obtain new results on various algorithms such as the celebrated multiplicative update algorithm for nonnegative matrix factorization by Lee and Seung, regularized nonnegative tensor decomposition, and the classical block projected gradient descent algorithm. Lastly, we numerically demonstrate that the additional use of diminishing radius can improve the convergence rate of BMM in many instances. 
\end{abstract}
	\maketitle
	\vspace{-0.8cm}
\tableofcontents

	\section{Introduction}
	\label{Introduction}
	
	Throughout this paper, we are interested in the minimization of a continuous function $F :\mathcal{E}:=\R^{I_{1}}\times \dots \times \R^{I_{m}}\rightarrow [0,\infty)$ on a cartesian product of closed convex sets $\Param=\Theta^{(1)}\times \dots \times \Theta^{(m)}$: 
	\begin{align}\label{eq:block_minimization}
		\param^{*} \in \argmin_{\param=[\theta^{(1)},\dots,\theta^{(m)}]\in \Param} \left(F(\param):=f(\param) +p(\param)\right).
	\end{align} 
    The objective function $F$ is the sum of a smooth (possibly nonconvex) part $f$ and a nonsmooth (continuous and convex) part $p$. 
    \commHL{Under a minimal set of assumptions, we investigate how to obtain first-order optimal points of \eqref{eq:block_minimization} from an arbitrary initialization.} 

	To obtain first-order optimal solutions to \eqref{eq:block_minimization}, we consider algorithms based on  \textit{Block Majorization-Minimization} (BMM) \cite{hong2015unified}. The high-level idea of BMM is that, in order to minimize a multi-block objective, one can minimize a majorizing surrogate of the objective in each block in a cyclic order\footnote{The order of block updates need not be cyclic, see \cite{hong2015unified} for other update rules.}: For $n\ge 1$ and $i=1,\dots,m$, 
	\begin{align}\label{eq:EBMM}
		\textbf{\textup{BMM}}\quad  
		\begin{cases}
			g_{n}^{(i)} \leftarrow \left[ \begin{matrix} \textup{Majorizing surrogate of} \\ \textup{$\theta\mapsto f_{n}^{(i)}(\theta):=f\left(\theta_{n}^{(1)},\cdots,\theta_{n}^{(i-1)},\theta,\theta_{n-1}^{(i+1)},\cdots, \theta_{n-1}^{(m)}\right)$}
			\end{matrix} \right] \\
   p_{n}^{(i)}(\theta):=p\left(\theta_{n}^{(1)},\cdots,\theta_{n}^{(i-1)},\theta,\theta_{n-1}^{(i+1)},\cdots, \theta_{n-1}^{(m)}\right)\\
			\theta_{n}^{(i)}\in \argmin_{\theta\in \Theta^{(i)}}  \left(G_{n}^{(i)}(\theta):=g_{n}^{(i)}(\theta)+p_{n}^{(i)}(\theta)\right).
		\end{cases}
	\end{align}

	BMM entails numerous well-known algorithms for constrained nonconvex minimization. First, when the smooth part $f$ of the objective function is convex in each block (i.e., block multi-convex) and the surrogate $g_{n}^{(i)}$  in \eqref{eq:EBMM} is identical to $f_{n}^{(i)}$, then BMM reduces to block coordinate descent (BCD), also known as nonlinear Gauss-Seidel \cite{ wright2015coordinate}, where one sequentially minimizes the objective in each block coordinate while the others  are held fixed:
	\begin{align}\label{eq:BCD_factor_update}
		\hspace{-1.3cm} \textbf{\textup{BCD}} \hspace{2cm} 	\theta_{n}^{(i)}\in \argmin_{\theta\in \Theta^{(i)}}   \left(F_{n}^{(i)}(\theta)=f_{n}^{(i)}(\theta) + p_{n}^{(i)}(\theta)\right),
	\end{align}
 where $f_{n}^{(i)}$ and $p_{n}^{(i)}$ are defined in \eqref{eq:EBMM}. 
	Due to its simplicity, BCD has been widely used in various optimization problems such as nonnegative matrix or tensor factorization \cite{lee2001algorithms, kolda2009tensor}. 
  Using proximal surrogates in \eqref{eq:EBMM}, BMM becomes BCD with proximal regularization (BCD-PR): 
	\begin{align}\label{eq:BCD_factor_update_proximal}
		\hspace{-0.1cm} 
		\textbf{\textup{BCD-PR}} \hspace{0.8cm}	\theta_{n}^{(i)}\in \argmin_{\theta\in \Theta^{(i)}}  \left( G_{n}^{(i)}(\theta):= f_{n}^{(i)}(\theta) + \frac{\lambda}{2}\lVert \theta - \theta_{n-1}^{(i)} \rVert^{2} + p_{n}^{(i)}(\theta) \right), 
	\end{align}
	where $\lambda\ge 0$ is a fixed constant. If we use prox-linear surrogates in \eqref{eq:EBMM}, then BMM becomes the block prox-linear minimization \cite{xu2013block}, which is equivalent to the block projected gradient descent (BPGD) \cite{tseng2009coordinate} when the nonsmooth part $p$ is nonexistent:
	\begin{align}\label{eq:BProxLinear}\hspace{-0.6em}\textbf{BPGD}  \quad 
    \begin{cases}
		g_{n}^{(i)}(\theta):= f_{n}^{(i)}(\theta_{n-1}^{(i)} ) + \langle \nabla f_{n}^{(i)}(\theta_{n-1}^{(i)}) ,\, \theta- \theta_{n-1}^{(i)}  \rangle + \frac{\rho}{2} \lVert \theta - \theta_{n-1}^{(i)}\rVert^{2} \\ 
        \theta_{n}^{(i)} \leftarrow \argmin_{\theta\in \Theta^{(i)}} \left( \commHL{G_{n}^{(i)}(\theta) = g_{n}^{(i)}(\theta) + p_{n}^{(i)}(\theta)} \right)  \\
		\hspace{1.1cm}=  \textup{Proj}_{\Theta^{(i)}} \left( \theta^{(i)}_{n-1} - \frac{1}{\rho} \nabla f_{n}^{(i)}(\theta_{n-1}^{(i)}) \right)\; \; \textup{if $p=0$.}
  \end{cases}
	\end{align}
	The function $g_{n}^{(i)}$ in \eqref{eq:BProxLinear} is indeed a majorizing surrogate of $f_{n}^{(i)}$ when the smooth part $f$ of the objective has $L$-Lipschitz gradient and $\rho \ge L$. 
	BPGD has applications in nonnegative matrix factorization \cite{lin2007projected}, nonnegative tensor completion \cite{liu2012tensor}, 
    and many other problems where the objective function is generally nonconvex and the constraint set is convex in each block.

	A key advantage of BMM over BCD is that one can work with user-constructed majorizing surrogates $g_{n}^{(i)}$ that are strongly convex, while the smooth part $f_{n}^{(i)}$ of the marginal block objective may not even be convex. This advantage is implicit in BPGD but becomes apparent if we view it as a BMM with prox-linear surrogates in \eqref{eq:BProxLinear}, which is $\lambda$-strongly convex.  For instance, it ensures the uniqueness of their minimizer, which is a key property that warrants asymptotic convergence to stationary points. In addition, strong convexity also plays a key role in iteration complexity analysis \cite{xu2013block, hong2017iteration,kwon2023complexity} since it often implies square-summability of one-step parameter changes. 
 
    \revision{In this work, we establish that  the iteration complexity of BMM with $\rho$-strongly convex and $L_g$-smooth surrogates for general nonsmooth nonconvex constrained optimization problem 
    is $\widetilde{O}((1+L_g +\rho^{-1})\eps^{-2})$, where $\widetilde{O}(\cdot)$ is the usual $O(\cdot)$ hiding a poly-logarithmic factor in $\eps$. 
    Since $\rho\le L_{g}$, our result indicates that choosing surrogates that are not too steep ($L_{g}$ large) or too flat ($\rho^{-1}$ large) will be beneficial.    } 
    %
    ~
    \hspace{-0.5cm}\revision{\indent We move one step further beyond what the classical BMM can offer. Can we modify BMM in such a way that the complexity bound does not involve the undesirable factor of $\rho^{-1}$? The reason why this factor appears in the bound is as follows. When the surrogates are nearly flat ($\rho>0$ but small), one seeks to make aggressive parameter changes whenever possible, but it becomes ineffective when the objective cannot be greatly improved. }
    Thus, one may try to improve BMM by keeping the nearly flat surrogates if possible but gradually limiting the range of parameter changes as the algorithm proceeds. From this motivation, we propose BMM in conjunction with trust-region techniques.
    Namely, fix a sequence $(r_{n})_{n\ge 1}$ of numbers in $(0,\infty]$ (including $\infty$) that acts as the radii of the trust-region. We then generalize  \eqref{eq:EBMM} as 
	\begin{align}\label{eq:BMM_DR_highlevel}
	\hspace{0cm}	\textbf{BMM-DR}\hspace{0.18cm}	
	\begin{cases}
		g_{n}^{(i)}\leftarrow \textup{Majorizing surrogate of $f_{n}^{(i)}$ at $\theta_{n-1}^{(i)}$ as in \eqref{eq:EBMM}}  \\
		\theta_{n}^{(i)}\in \argmin_{\theta\in \Theta^{(i)},\, \lVert \theta-\theta_{n}^{(i)} \rVert \le r_{n}}  \left(G_{n}^{(i)}(\theta) = g_{n}^{(i)}(\theta)+ p_{n}^{(i)}(\theta)\right).
	\end{cases}
	\end{align}
 	The majorizing surrogate $g_{n}^{(i)}$ in \eqref{eq:BMM_DR_highlevel} is assumed to satisfy the following properties:
	\begin{description}[itemsep=0.1cm]
		\item{(1)} (Majorization) $g_{n}^{(i)}(\theta)- f_{n}^{(i)}(\theta) \ge 0$ for all $\theta \in \Theta^{(i)}$;
		\item{(2)} (Sharpness) $g_{n}^{(i)}(\theta_{n-1}^{(i)})=f_{n}^{(i)}(\theta_{n-1}^{(i)})$ and \revision{$\nabla g_{n}^{(i)}(\theta_{n-1}^{(i)})=\nabla f_{n}^{(i)}(\theta_{n-1}^{(i)})$};
		\item{(3)} (Strong convexity) $g_{n}^{(i)}$ is $\rho$-strongly convex on $\Theta^{(i)}$ for some $\rho\ge 0$. 
	\end{description}
    \vspace{0.1cm}
    Note that \eqref{eq:BMM_DR_highlevel} is identical to BMM \eqref{eq:EBMM} except that we restrict the range of parameter search within a radius $r_{n}$ from the previous estimation. When $r_{n}\equiv \infty$, then this additional radius constraint becomes vacuous and we recover the standard BMM \eqref{eq:EBMM}. The resulting algorithm, which we call BMM with diminishing radius (BMM-DR) is stated in Algorithm \eqref{eq:BMM_DR_highlevel}.  
 
 \commHL{Our key finding is that the additional trust-region constraint in BMM-DR improves the complexity bound to \revision{$\widetilde{O}((1+L_g)\eps^{-2})$}, removing the dependence on the inverse strong convexity parameter $\rho^{-1}$, even allowing convex surrogates $(\rho=0)$ (see Thm. \ref{thm:complexity}). The improvement is significant for BMM with `nearly flat surrogates' (e.g., linear surrogates for concave objectives \cite{breloy2021majorization}). We also note that unlike the classical use of trust-region where the radii are computed adaptively depending on the algorithm's progress \cite{conn2000trust}, we use a simple non-adaptive sequence of radii (e.g., $r_{n}=O(1/\sqrt{n})$). 
 }
 We also establish asymptotic convergence to first-order optimal points of BMM-DR. Our theoretical establishments are verified by numerical experiments in Section \ref{sec:exp}.

	\subsection*{Related work}
	There are several stylized examples of BMM in a wide range of problems. For the single block case ($m=1$), BMM reduces to the well-known majorization-minimization algorithm \cite{lange2000optimization}, which entails the EM algorithm for maximum likelihood estimation, 
    forward-backward splitting \cite{combettes2011proximal}, and iterative reweighted least squares \cite{daubechies2010iteratively}. 
    With multiple blocks  ($m\ge 2$): Multiplicative update for nonnegative matrix factorization by Lee and Seung \cite{lee2001algorithms}, 
    the convex-concave procedure for the difference of convex programs \cite{yuille2003concave}, alternating least squares for nonnegative CP decomposition \cite{carroll1970analysis}, and the classical proximal point algorithm \cite[Sec. 3.4.3]{bertsekas2015parallel}.

	Asymptotic convergence to stationary points of BCD for nonconvex objectives has been extensively studied in the literature.
 \cite{luo1992convergence}. It is well-known that BCD does not always converge to the stationary points of the non-convex objective function that is convex in each block  \cite{powell1973search}, but such global convergence is guaranteed under additional assumptions such as two-block ($m=2$), strict quasiconvexity for $m-2$ blocks \cite{ grippo2000convergence}, 
 or uniqueness of minimizer per block \cite[Sec. 2.7]{bertsekas1997nonlinear}. Due to the additional proximal regularization, BCD-PR is guaranteed to converge to stationary points as long as the proximal surrogates (see \eqref{eq:BCD_factor_update_proximal}) are strongly convex \cite{grippo2000convergence, xu2013block,kwon2023complexity}. 
 In \cite{razaviyayn2013unified}, BMM \eqref{eq:EBMM} for minimizing smooth objectives is known to converge to the set of stationary points when the surrogates $g_{n}^{(i)}$ have unique minimizer over the constraint sets $\Theta^{(i)}$. 
 For nonsmooth nonconvex constrained optimization, Xu and Yin \cite{xu2013block} showed that BCD, BCD-PR, and BPGD  converge asymptotically to the set of Nash equilibria (a weaker notion than stationary points) when the nonsmooth part of the objective (possibly in conjunction with the indicator function of convex constraint set) is not necessarily continuous but block-separable. However, asymptotic convergence to stationary points in the nonconvex nonsmooth constrained setting is still unknown.

For minimizing convex objectives, BMM reduces the gap between the current objective value and the global minimum at rate $O(1/n)$ in $n$ iterations \cite{hong2017iteration}, assuming strong convexity of the surrogates. A series of works including \cite{cai2023cyclic, beck2013convergence} 
 proved the complexity of BMM and its variants for convex objectives under different settings. A summary of some techniques used in the proofs can also be found in \cite{beck2017first}. 

{\color{black}

 Compared to the convex minimization case, the iteration complexity for BMM \eqref{eq:EBMM} for the constrained nonconvex nonsmooth setting is more limited. Here by `iteration complexity', we mean the worst-case number of iterations until an $\eps$-approximate first-order optimal point is obtained, using a suitable measure of sub-optimality. Xu and Yin \cite{xu2013block} obtain the local rate of convergence of BCD, BCD-PR, and BPGD under the additional assumption that the objective function satisfies the Kurdyka-\L{}ojasiewicz inequality. 
Recently, Lyu and Kwon showed that BCD-PR has iteration complexity of \revision{$\widetilde{O}((1+L_g+\rho^{-1})\eps^{-2})$} \cite{kwon2023complexity} both for the constrained and unconstrained settings, but their result do not cover nonsmooth objectives and general surrogates. A Riemannian counterpart of BPGD for compact manifolds was recently shown to have iteration complexity of $\widetilde{O}(\eps^{-2})$ \cite{peng2023block}. However, this result does not hold for the Euclidean setting with or without constraints, as the underlying manifold should be compact without boundary. Razaviyayn et al. \cite{razaviyayn2014parallel} shows that BMM with randomized coordinate update has complexity of 
\revision{$O_{\E}((1+\frac{L_{g}^{2}}{\rho}+\rho^{-1})\eps^{2})$}, where the subscript $\E$ means that the iteration complexity holds after taking the expectation over the randomness of the block selection during the algorithm. However, for the cyclic update rule, the (almost sure) complexity of BMM is not yet known. 

We are the first to propose BMM-DR so there are no prior results on its iteration complexity or asymptotic convergence properties. 
 
 \begin{table}[H]
 
		\centering
		\begin{tabular}{ccccccccc}
			\multicolumn{1}{c}{Methods} & Objective  & $\begin{matrix}\textup{Block}\\ \textup{update} \end{matrix}$ & Complexity & $\begin{matrix}\textup{Asymp.}\\ \textup{conv.} \end{matrix}$  &  $\begin{matrix} \text{Inexact} \\ \text{computation} \end{matrix}$ \\ \hline
			\multicolumn{1}{c}{BPGD \cite{xu2013block}}  &C \& NS & cyclic  &  \textup{Depends on KL-ineq.} &\checkmark & \xmark\\
			\multicolumn{1}{c}{BPGD \cite{beck2013convergence}}&C \& S & cyclic &$\widetilde{O}(\eps^{-1})$ & \xmark & \xmark\\
   BCD-PR \cite{kwon2023complexity} &NC \& S &cyclic &\revision{$\widetilde{O}((1+L_g+\rho^{-1})\eps^{-2})$} &\checkmark &\checkmark \\ 
             BMM \cite{razaviyayn2014parallel} &NC \& NS& random & \revision{$O_{\E}((1+\frac{L_{g}^{2}}{\rho}+\rho^{-1})\eps^{-2})$} & $\checkmark$  & $\xmark$ \\ 
              BMM  \cite{razaviyayn2014parallel} &NC \& NS & cyclic & $\xmark$ & $\checkmark$ & $\xmark$ \\ 
            \hline           
			 \textbf{BMM (Ours)}   & NC \& NS & cyclic & \revision{$\widetilde{O}((1+L_g+\rho^{-1})\eps^{-2})$} & $\checkmark$  & $\checkmark$ \\
    \textbf{BMM-DR (Ours)}   & NC \& NS & cyclic & \revision{$\widetilde{O}((1+ L_g)\eps^{-2})$} & $\checkmark$ & $\checkmark$ \\
   \hline 
		\end{tabular}
        
		\caption{ Survey known results on the iteration complexity of BMM for multi-block minimization problems with $\rho$-strongly convex and $L_{g}$-smooth surrogates. $C=$ Convex, $NC=$ Nonconvex, $S=$ Smooth, and $NS=$ Nonsmooth. 
		}
		\label{table:summary} 
	\end{table}
 
	\subsection*{Contribution}

	In this work, we analyze BMM with the optional trust-region  \eqref{eq:BMM_DR_highlevel}. Our main results are summarized below:
	\begin{description}
        \item[(1)] \commHL{We obtain a worst-case (anytime) bound  of \revision{$\widetilde{O}((1+L_g+\rho^{-1})\eps^{-2})$} on the number of iterations to achieve $\eps$-approximate `stationary-Nash' points (see Def. \ref{eq:stationary3}) using $\rho$-strongly convex and \revision{$L_g$-smooth} surrogates. (\revision{If $p$ is block-separable, stationary-Nash points are simply stationary points.}) 
        }

		\item[(2)] \commHL{Using an optional trust region with diminishing radius with the same surrogates in (1), we  obtain an improved iteration complexity \revision{$\widetilde{O}((1+L_g)\eps^{-2})$} that is independent of the strong convexity parameter $\rho$ of the surrogates; 
        }

        \item[(3)] Global asymptotic convergence of BMM(-DR) to \revision{stationary points from arbitrary initialization assuming block-separability of the nonconvex part $p$}; 

		\item[(4)] All the aforementioned results hold under inexact execution of the algorithm. 
	\end{description}
 
	To the best of our knowledge, we believe that our work provides the first result on the global rate of convergence and iteration complexity of BMM(-DR) for minimizing nonsmooth nonconvex objectives under convex constraints. Especially, our rate of convergence does not depend on $\rho^{-1}$ if trust-region with diminishing radius is used. 
    For gradient descent methods with an unconstrained nonconvex objective, it is known that such a rate of convergence cannot be faster than $\widetilde{O}(\eps^{-2})$ \cite{cartis2010complexity}, so our rate bound matches the optimal result up to a polylogarithmic factor. \revision{Furthermore, prior result on asymptotic convergence on BMM for nonconvex nonsmooth optimization shows that the limit points are only Nash but not stationary, even with the block separability assumption on the nonsmooth part \cite{xu2013block}}. \revision{We also show that these results continue to hold if convex sub-problems are solved inexactly, allowing for easier practical implementation of the algorithm with the same theoretical guarantees. Such robustness results are not often provided in the literature.} 
    

    We apply our general framework to various stylized examples such as nonnegative matrix factorization (NMF), nonnegative CANDECOMP/PARAFAC decomposition (NCPD), and block projected gradient descent (BPGD) and get the following results: 
    \begin{description}[itemsep=0.05cm]
        \item[(5)]  For BPGD, we obtain iteration complexity \revision{$\widetilde{O}((1+\rho+\rho^{-1})\eps^{-2})$}.
        
        \item[(6)] A regularized version of the multiplicative update algorithm for nonnegative matrix/tensor factorization with guaranteed asymptotic convergence to stationary points and iteration complexity of \revision{$\widetilde{O}(\eps^{-2})$, where the implicit constant depends on the hyperparameters.
        }
	\end{description}
    We believe that these are the first iteration complexity results for NMF and NCPD as well as BPGD for nonconvex objectives. \commHL{We experimentally validate our theoretical results with both synthetic and real-world data. We show that using trust regions improves the performance of BMM on both matrix factorization and tensor decomposition problems. Moreover, we find that our algorithms outperform the existing ones especially when the matrix and tensors to be factorized are sparse. }

    \revision{Central to our analysis is a novel continuous first-order optimality measure \eqref{eq:stationary3}, by which we bound the worst-case sub-optimality in each iteration by the first-order improvement the algorithm makes.}
 

	\subsection{Notation}
	Denote $a\land b=\min(a\land b)$ for $a,b\in \R$. For a convex function $p:\Param\rightarrow \R$ and $\param\in \Param$, let $\partial p(\theta)$ denote the \textit{subdifferential set}
 \begin{align}
    \partial p(\param) := \{ \eta \in \R^{p}\,:\, p(\param')-p(\param)  \ge \langle \eta,\, \param'-\param \rangle\,\, \textup{for all $\theta\in \Param$} \}. 
\end{align}
    By a slight abuse of notation, we also use $\partial p(\param)$ to denote a generic subdifferential in $\partial p(\param)$. }Throughout this paper, we will denote by $(\param_{n})_{n\ge 1}$ an (possibly inexact) output of Algorithm \eqref{eq:BMM_DR_highlevel}. Fix $n\ge 1$ and $i=1,\dots,m$. Write $\param_{n}=[\theta^{(1)}_{n},\dots,\theta^{(m)}_{n}]$ and for each $\theta\in \R^{I_{i}}$, define 
    \begin{align}\label{eq:def_intermediate_params}
     \param_{n;i} &:= \left(\theta_{n}^{(1)},\cdots,\theta_{n}^{(i-1)},\theta_{n}^{(i)},\theta_{n-1}^{(i+1)},\cdots, \theta_{n-1}^{(m)}\right), \\
     (\theta, \param_{n;i}) &:= \left(\theta_{n}^{(1)},\cdots,\theta_{n}^{(i-1)},\theta,\theta_{n-1}^{(i+1)},\cdots, \theta_{n-1}^{(m)}\right).
    \end{align}
    Then we can write the function $f_{n}^{(i)}$ in \eqref{eq:EBMM}
    as $f_{n}^{(i)}(\theta):= f (\theta, \param_{n;i})$.    
    Also, we denote 
    \begin{align}\label{eq:def_radius_restricted_constraint}
		\Theta^{(i)}_{n}:=\{ \theta\in \Theta^{(i)}\,|\, \lVert \theta-\theta_{n-1}^{(i)} \rVert \le r_{n}  \},
    \end{align}
	which is the constraint set that appears in Algorithm \eqref{eq:BMM_DR_highlevel}. Denote 
	$\Lambda := \{ \param_{n}\,|\, n\ge 1 \}\subseteq \Param$. Also denote $\Lambda_{n}^{\star}$ the set of the exact output of one step of Algorithm \eqref{eq:BMM_DR_highlevel}:
	\begin{align}
		\Lambda_{n}^{\star}:=\left\{ \param_{n}^{\star}=[\theta_{n}^{(1\star)},\dots,\theta_{n}^{(m\star)}] \,:\, \begin{matrix} \textup{$\theta_{n}^{(i\star)}$ is an exact minimizer of $g_{n}^{(i)}$} \\ \textup{over $\Theta^{(i)}_{n}$ for $i=1,\dots,m$} \end{matrix} \right\}. 
	\end{align}
	We will denote a generic element of $\Lambda^{\star}_{n}$ by $\param_{n}^{\star}$. 

	\subsection{Orgnization} 
	
	This paper is organized as follows. We state 
 the main results in Section \ref{sec:results}. \commHL{Section \ref{sec:sketch} gives a sketch of the analysis of the complexity results.} In Section  \ref{sec:pf_iteration_complexity}, we prove the iteration complexity results stated in Theorem \ref{thm:complexity} \textbf{(i)}-\textbf{(ii)}. In Section \ref{sec:pf_asymptotic_stationarity}, we prove the asymptotic stationary result stated in Theorem \ref{thm:complexity} \textbf{(iii)}. Then we provide some applications of our theory in Section \ref{sec:app}. Finally, we present the experimental results of the applications in Section \ref{sec:exp}.

	\section{Statement of main results}
 \label{sec:results}

	\subsection{Measure of stationarity}

    Recall that a necessary condition for a point $\param^{*}\in \Param$ to be a local minimizer of the objective $F=f+p$ over $\Param$ is the following first-order optimality condition 
	\begin{align}\label{eq:stationary}
	\sup_{\param\in \Param,\, \lVert \param - \param^{*} \rVert \le 1} \, \langle -\nabla f(\param^{*}) -\partial p(\param^{*}) ,\,  \param - \param^{*} \rangle \le 0,
	\end{align}	
	where $\langle \cdot,\, \cdot \rangle$ denotes the dot product on $\R^{I_{1}+\dots+I_{m}}\supseteq \Param$. Points satisfying the above condition are called the \textit{stationary points} of $F$ over $\Param$. We propose an equivalent condition for first-order stationarity as 
    \begin{align}\label{eq:stationary2}
	\sup_{\param\in \Param,\, \lVert \param - \param^{*} \rVert \le 1} \bigg[ \,V(\param^{*},\param):= \left\langle -\nabla f(\param^{*}),\, \param-\param^{*} \right\rangle + p(\param^{*})-p(\param)\bigg]\le 0. 
	\end{align}	
    To see the equivalence, first note that the convexity of $p$ yields 
    \begin{align}\label{eq:weaker}
        p(\param^{*}) - p(\param) \le \langle -\partial p(\param^{*}),\, \param-\param^{*} \rangle \quad \textup{for all $\param\in \Param$}. 
    \end{align}
    Hence \eqref{eq:stationary} implies \eqref{eq:stationary2}. Conversely, suppose \eqref{eq:stationary2} holds. Let $\varphi(\param)=V(\param^{*}, \param)$ denote the concave function in the supremum in \eqref{eq:stationary2}. Since $\varphi(\param^{*})=0$, \eqref{eq:stationary2} implies that $\param^{*}$ is a local maximizer of $\varphi$ over $\Param$. Then writing the first-optimality condition for $\param^{*}$ being a local minimizer of $-\varphi$ over $\Param$ and noting  $-\partial \varphi(\param^{*})=\nabla f(\param^{*})+\partial p(\param^{*})$ gives exactly \eqref{eq:stationary}. 
    
    An important advantage in using the equivalent stationary measure in \eqref{eq:stationary2} is that the bi-variate function $V$ is \textit{continuous} whenever the non-smooth part $p$ is continuous (e.g., $\ell_{1}$-regularization), whereas the corresponding function in \eqref{eq:stationary} is not. Note that we can still incorporate hard convex constraints $\Param$ while keeping $p$ continuous instead of viewing the discontinuous indicator function of $\Param$ as part of $p$ (see, e.g., \cite{xu2013block}). 

    For block nonconvex nonsmooth optimization problems as we consider in \eqref{eq:block_minimization}, BCD-type algorithms may not always converge to the stationary points \cite[p.94]{Auslender1976OptimisationM}, but they converge to \textit{Nash points} that are first-order optimal with respect to perturbing a single block coordinate \cite{xu2013block}. Accordingly, we introduce the following notion of \textit{stationary-Nash} points  for the points $\param^{*}$ satisfying the following condition:
    \begin{align}\label{eq:stationary3}
    {
		\sup_{\substack{\param\in \Param \\  \lVert \param - \param^{*} \rVert \le 1} } \bigg[ \,\widetilde{V}(\param^{*},\param):= \left\langle -\nabla f(\param^{*}),\, \param-\param^{*} \right\rangle +  \sum_{i=1}^{m} p(\param^{*})-\revision{p\big(\param^{*}+(\param - \param^*) \mathbf{e}^{(i)}\big)}\bigg]\le 0,
    }
	\end{align}	
    where $\theta^{(i)}$ denotes the $i$th block coordinate of $\param$ and $\mathbf{e}^{(i)}$ is the indicator vector for the $i$th block coordinate. \revision{However, it is important to notice that if the nonsmooth part $p$ is block-separable (i.e., $p(\theta^{(1)},\dots,\theta^{(m)})=\sum_{i=1}^{m} p_{i}(\theta^{(i)})$ for $p_{i}$ convex), then our notion of stationary-Nash points agree with that of stationary points since \eqref{eq:stationary2} and \eqref{eq:stationary3} coincide. While Xu and Yin \cite{xu2013block} and Razaviyayn et al. \cite{razaviyayn2014parallel} make such block-separability assumption for the nonsmooth part of the objective, we do not make such assumption unless otherwise mentioned and aim for the most general setting.}

    For iterative algorithms, a first-order optimality condition may hardly be satisfied exactly in a finite number of iterations, so it is often important to know how many iterations are required until an $\eps$-approximate solution is guaranteed to be obtained. Accordingly, we say $\param^{*}\in \Param$ is an \textit{$\eps$-approximate stationary-Nash point} of $F$ over $\Param$ if 
	\begin{align}\label{eq:stationary_approximate}
		\sup_{\param\in \Param,\, \lVert \param - \param^{*} \rVert \le 1}  \commHL{\,\widetilde{V}(\param^{*},\param)} \le \eps.
	\end{align}	
    For each $\eps>0$ we define the \textit{worst-case iteration complexity} $N_{\eps}$ of an algorithm for solving \eqref{eq:block_minimization} to be the worst-case (w.r.t. initialization $\param_{0}$) number of iterations to guarantee $\eps$-approximate stationary-Nash point of $F$ over $\Param$. \revision{Again, recall that $\widetilde{V}=V$ for block-separable $p$ so $\eps$-approximate stationary-Nash points are $\eps$-approximate stationary points in that case.} 

\revision{
It is worth expanding on some discussion on the choice of optimality measures for nonconvex nonsmooth constrained problems since there are multiple ways to choose them. They are typically obtained by relaxing various equivalent conditions for first-order stationarity. For example, the $\eps$-relaxation of the following stationarity condition 
\begin{align}\label{eq:dist}
    \textup{dist}(\textbf{0},\partial F(\param^*)+ \mathcal{N}_{\Param}(\param^*))\le 0
\end{align}
is standard, where $\mathcal{N}_{\Param}(\param^{*})$ denotes the normal cone of $\Param$ at $\param^{*}$. However, Davis and Drusvyatskiy, in their celebrated work \cite{davis2019stochastic}, noted that the resulting stationarity measure is difficult to work with due to its highly discontinuous nature. They proposed to use the norm of the Moreau envelope as an alternative measure, since it being small implies the existence of an approximate stationary point (in the sense of \eqref{eq:dist}) near the estimated parameter.}

\revision{
It appears to us that our sub-optimality measure $\widetilde{V}$ in \eqref{eq:stationary_approximate} strikes a nice balance between generality and manageability with many desirable properties. First, we have already noted that it is continuous, which allows us to push the convergence analysis much further for nonconvex nonsmooth optimization. Second, it is a direct measure of the sub-optimality of the parameter being evaluated in contrast to the near-stationary measure of Davis and Drusvyatskiy. Third, for nonconvex smooth optimization ($p=0$), it agrees with \eqref{eq:dist} (see \cite[Prop. B.1]{alacaoglu2023convergence}). Fourth, if $p$ is block-separable, then $\widetilde{V}$ coincides with $V$ \eqref{eq:weaker} so stationary-Nash point becomes the usual stationary point. Furthermore, if $f=0$, then $\widetilde{V}$ reduces to the commonly used function value gap measure in convex optimization problems. }

\revision{
We acknowledge that it is still open to obtain iteration complexity for BMM for general nonconvex nonsmooth optimization problems with the stationary measure from \eqref{eq:dist}. The only other related work we are aware of is Razaviyain et al. \cite{razaviyayn2014parallel}, which uses the $\eps$-relaxation of the gradient mapping being zero (also a near-stationarity measure) with block-separable nonsmooth part. 
} 

	\subsection{Assumptions}
	
	Throughout this paper, we assume the following conditions:

	\begin{customassumption}{A1} \label{assumption:A1}
		The constraint sets  $\Theta^{(i)}\subseteq \R^{I_{i}}$, $i=1,\dots,m$ are nonempty, closed, and convex (but not necessarily compact) subsets in  $\R^{I_{i}}$. 
	\end{customassumption}
	\begin{customassumption}{A2} \label{assumption:A2}
		The objective function $F=f\commHL{+p}:\Param \rightarrow \R$, where $f$ is continuously differentiable and $p$ is convex and possibly non-smooth. For each compact subset $\Param_{0}\subseteq \Param$, there exist a constant $L=L(\Param_{0})$ such that $\lVert \nabla f(x) - \nabla f(y) \rVert\le L \lVert x-y \rVert$ \commHL{and $|p(x)-p(y)| \le  L \lVert x-y \rVert$}  for all $x,y\in \Param_{0}$. 
    Also, $F^{*}:=\inf_{\param\in \Param} F(\param)>-\infty$. Furthermore, the sub-level sets $F^{-1}((-\infty, a])=\{ \param\in \Param\,:\, F(\param)\le a \}$ for $a\in \R$ are compact. 
	\end{customassumption}

	\noindent	
	In \ref{assumption:A1}, we allow the the constraint set $\Theta^{(i)}$ to be the whole space $\R^{I_{i}}$. 
	The $C^{1}$-assumption of $f$ in \ref{assumption:A2} is weaker than the $L$-smoothness assumption that is standard in the literature of BCD (see, e.g., \cite{xu2013block}).
	
	Next, we define the \textit{majorization gap} as the function  $h_{n}^{(i)}:=g_{n}^{(i)}-f_{n}^{(i)}$  for each $n\ge 0$ and $i=1,\dots,m$. Note that $h_{n}^{(i)}\ge 0$, $h_{n}^{(i)}(\theta_{n-1}^{(i)})=0$. Hence if we assume $h_{n}^{(i)}$ is differentiable, then necessarily $\nabla h_{n}^{(i)}(\theta_{n-1}^{(i)})=0$. 
	
	\begin{customassumption}{A3} \label{assumption:A3}
       The surrogates  $g_{n}^{(i)}$ for all $n,i$ have $L_{g}$-Lipschitz continuous gradients  for some constant $L_{g}>0$: 
            For all $\theta,\theta^{*}\in \Theta^{(i)}$, 
      \begin{align}
		&\hspace{-1cm}\textup{(Lipschitz gradients)}\hspace{1cm}	\lVert \nabla g_{n}^{(i)}(\theta) - \nabla g_{n}^{(i)}(\theta^{*})   \rVert \le L_{g} \lVert \theta-\theta^{*} \rVert. \label{eq:surrogate_L_smoothness}
	\end{align}
    Furthermore, the surrogates $g_{n}^{(i)}$ are $\rho$-strongly convex for some $\rho\ge 0$ (allowing $\rho=0$). 
    Also, assume that either of the following holds: 
		\begin{description}[itemsep=0.1cm]
			\item[(a)] (\textit{Trust-region used}) \,\, $\sum_{n=1}^{\infty} r_{n}^{2}<\infty$ and $r_{n+1}/r_{n}=O(1)$; or 
			
			\item[(b)] (\textit{Trust-region not used}) \,\,  
			$r_{n}\equiv \infty$ and $\rho>0$.
		\end{description}
	\end{customassumption}
    It is straightforward to extend our analysis to the case where the smoothness parameter $L_{g}$ in \eqref{eq:surrogate_L_smoothness} 
    depends on the block index $i$. For simplicity of presentation, we do not pursue this straightforward generalization.

    Note that the sub-problem of block minimization in Algorithm \eqref{eq:BMM_DR_highlevel} amounts to minimizing convex majorizing surrogate $G_{n}^{(i)}$ over the constraint set $\Theta^{(i)}$ if $r_{n}=\infty$ or the intersection $\Theta^{(i)}\cap \{ \theta \,:\, \lVert \theta - \theta_{n-1}^{(i)} \rVert \le r_{n} \}$ if $r_{n}<\infty$, which are both convex sets. Hence each iteration of Algorithm \eqref{eq:BMM_DR_highlevel} can be readily executed using standard convex optimization procedures (see, e.g., \cite{bertsekas1997nonlinear}). 
    For instance, each iteration of BPGD \eqref{eq:BProxLinear} for smooth objectives can be exactly computed given that projection onto the convex constraint set $\Theta^{(i)}$ has a closed-form expression (e.g., nonnegativity constraints or threshold). 
    
    In the literature of BMM, it is often assumed that $\Delta_{n}\equiv 0$ \cite{razaviyayn2013unified,razaviyayn2014parallel, xu2013block}. 
    However, for many instances of Algorithm \eqref{eq:BMM_DR_highlevel}, it could be the case that the convex sub-problems can only be solved approximately. Fortunately, our analysis of Algorithm \eqref{eq:BMM_DR_highlevel} allows \textit{inexact computation} of solutions to the convex sub-problems, as long as the `optimality gaps' are summable. To be precise, we define the \textit{optimality gap} at iteration $n$ as
	\begin{align}\label{eq:def_sub_optimality_gap}
		\Delta_{n}&=\Delta_{n}(\param_{0}):= \max_{1\le i \le m}  \left( \commHL{G_{n}^{(i)} (\theta^{(i)}_{n}) - G_{n}^{(i)}(\theta_{n}^{(i\star)})} \right),  \\
        &\textup{where \,\, $\theta_{n}^{(i\star)}\in \argmin_{\theta\in \Theta^{(i)} ,\, \lVert \theta - \theta^{(i)}_{n-1} \rVert \le r_{n}} \commHL{G_{n}^{(i)} (\theta) }$}. \nonumber
	\end{align}
    Then we require the following summability of optimality gaps as in \ref{assumption:A4}.

	\begin{customassumption}{A4} \label{assumption:A4}
		The optimality gaps are summable:  $\sum_{n=1}^{\infty} \Delta_{n} <\infty$. 
	\end{customassumption}

We remark that when the surrogates are $\rho$-strongly convex and $L_{g}$-smooth, then one can satisfy \ref{assumption:A4} by using the well-known complexity result for proximal gradient descent \cite[Thm. 10.29]{beck2017first}. Namely, compute each $\theta_{n}^{(i)}$ by running proximal gradient descent for $k_{n}$ sub-iterations with stepsize $\tau<1/L_{g}$ and initialization $\theta_{n-1}^{(i)}$. Then $\Delta_{n}\le \frac{L_{g}}{2}(1-(\rho/L_{g}))^{k_{n}} \sum_{i=1}^{m} \lVert \theta_{n-1}^{(i)} - \theta_{n}^{(i\star)} \rVert^{2}$. In \eqref{eq:theta_theta_star_gap_finite}, we show that the sum of $\sum_{i=1}^{m} \lVert \theta_{n-1}^{(i)} - \theta_{n}^{(i\star)} \rVert^{2}$ over $n\ge 1$ is finite. Hence \ref{assumption:A4} is verified if $(1-(\rho/L_{g}))^{k_{n}}=O(n^{-2})$. For this, it is enough to have $k_{n}\approx \log n$. The sub-iterations of $\log n$ steps for $n=1,\dots,T$ contribute only a $\log T$ factor to the total complexity, which is negligible. 

    \noindent 

	

	\subsection{Statement of main results}
	
	Now we state the main result, Theorem \ref{thm:complexity}. 

	\begin{theorem}\label{thm:complexity}
		Assume \ref{assumption:A1}-\ref{assumption:A4} hold.  Let $(\param_{n})_{n\ge 0}$ be an (possibly inexact) output of Algorithm \eqref{eq:BMM_DR_highlevel}. Then the following hold: 
    \vspace{0.1cm}
		\begin{description}[itemsep=0.1cm]
			\item[(i)] (Rate of convergence) There exists constants $M,c>0$ such that for  $n\ge 1$,
		\end{description}
  \begin{align}\label{eq:thm_convergence_bd}
				\min_{1\le k \le n}  	 \,\, \left[ \sup_{\param\in \Param,\, \lVert \param-\param_{k} \rVert\le 1} 
            \tilde{V}(\param_{k},\param)
   \right]  
   &\le  \revision{c \frac{  M+L_{g}+ \left( \rho^{-1}\land \sum_{k=1}^{n} r_{k}^{2} \right)   }{  (\sqrt{n}/\log n) \land \sum_{k=1}^{n}r_{k}  }},
			\end{align}
           (\commHL{See \eqref{eq:C_A3} and below for explicit expressions for the constants $M,c$.})
  \vspace{0.1cm}
	\begin{description}[itemsep=0.1cm]	
			\item[(ii)] (Worst-case iteration complexity)  If $r_{n}\equiv \infty$, then the worst-case iteration complexity $N_{\eps}$ for Algorithm \eqref{eq:BMM_DR_highlevel} satisfies \commHL{$N_{\eps} = O\big( (1+L_g+\rho^{-1}) \eps^{-2} (\log \eps^{-1})^{2} \big)$. If $r_{n}=1/( \sqrt{n} \log n)$ for $n\ge 1$, then the iteration complexity improves to $N_{\eps} = O((1+L_g)\eps^{-2} (\log \eps^{-1})^{2} )$.}
			
			\item[(iii)] (Asymptotic stationarity) Further assume that  $\sum_{n=1}^{\infty} r_{n}=\infty$. Then $(\param_{n})_{n\ge 1}$ converges to the set of stationary-Nash points of $F$ over $\Param$. \revision{In particular, if $p$ is block-separable, then $(\param_{n})_{n\ge 1}$ converges to the set of stationary points of $F$ over $\Param$. }

		\end{description}
	\end{theorem}

    Theorem \ref{thm:complexity}\textbf{(i)} provides a bound on the rate of convergence in terms of the stationarity measure introduced in \eqref{eq:stationary_approximate}. The result covers both options when trust-regions of square-summable radii are used or not throughout the iterations. When trust-region is not used, the asymptotic rate is $O(n^{-1/2}\log n)$. However, the constant $1+L_g +\rho^{-1}$ grows unbounded when the strong convexity parameter $\rho$ for the surrogates is vanishingly small. Therefore, using `flat' surrogates may hinder the rate of convergence. One may try to circumvent this issue by using `steep' surrogates (e.g., by adding large proximal terms), but this will be penalized by $L_g$ in the constant of convergence rate since $L_g\ge \rho$. See also in Fig. \ref{fig:NMF_DR} for numerical results.

    \commHL{The upper bound on the rate of convergence in case of trust-region with diminishing radius in Theorem \ref{thm:complexity}\textbf{(i)} suggests that we can get rid of the unfavorable dependence on $\rho^{-1}$ \textit{with the same surroagates} and \textit{without affecting the rate of convergence.} Indeed, by choosing radii $r_{n}=1/( \sqrt{n} \log n)$ for $n\ge 1$, we get $1 \big/ \sum_{k=1}^{n} \min\{ r_{k},1\} = O(n^{-1/2}\log n)$ and the numerator in the bound is the same constant $1+L_g$ without the term depending on $\rho^{-1}$. To our best knowledge, the best convergence rate for nonconvex nonsmooth block optimization was $O_{\E}((1+L_{g}^{2}\rho^{-1}+\rho^{-1})n^{-1/2})$ for a randomized block coordinate descent in \cite{razaviyayn2014parallel}, and there is no known bounds on convergence rate that holds almost surely, especially without the dependence on $\rho^{-1}$. }

	Theorem \ref{thm:complexity}\textbf{(ii)} gives a worst-case iteration complexity of Algorithm \eqref{eq:BMM_DR_highlevel} of producing and $\eps$-stationary point. This can be easily obtained from Theorem \ref{thm:complexity}\textbf{(i)} by setting the upper bound to be less than $\eps$.

	Lastly, Theorem \ref{thm:complexity}\textbf{(iii)} states that the iterates produced by Algorithm \eqref{eq:BMM_DR_highlevel}, possibly solving the sub-problems inexactly with summable optimality gap, asymptotically converges to the set of \commHL{stationary-Nash} points of the problem \eqref{eq:block_minimization}. \revision{In particular, when the nonsmooth part $p$ is block-separable, this result shows the first asymptotic stationarity of BMM(-DR) iterates in the literature.} 

    The most technical part of our asymptotic analysis is to handle inexact computation when bounded trust regions are used. Roughly speaking, for asymptotic analysis with trust-region, we need to show that the additional trust-region constraints `vanish' in the limit in the sense that any convergent subsequence of the iterates cannot touch the trust-region boundaries indefinitely. Allowing inexact computation of the surrogate minimization within the trust region poses an additional challenge. The analysis is given in Section \ref{sec:pf_asymptotic_stationarity}.

	\section{Sketch of analysis}
	\label{sec:sketch}

	In this section, we give a high-level description of our analysis and discuss the key difficulties and how we will handle them. 

    We first discuss key challenges in the analysis, especially for BMM with trust regions with square-summable radii. 
	For most iteration complexity analysis of first-order methods (e.g., projected gradient descent), one uses first-order optimality of each update w.r.t. the corresponding sub-problem and relates the optimality measure of the overall objective at each iteration with the amount of parameter change \cite{xu2013block}. Using this approach, we can deduce the following bound 
		\begin{align}\label{eq:sketch_main_ineq}
			\sup_{ \substack{ \param\in \Param \\  \lVert \param-\param_{n} \rVert\le (r_{n}\land 1)} }  \sum_{i=1}^{m}\left(-\langle \nabla_i f(\param_{n}) + \partial p_{n}^{(i)}(\theta_{n}^{(i)}), \theta^{(i)}-  \theta^{(i)}_{n} \rangle  \right) = O\left(  \lVert \param_{n}-\param_{n-1}\rVert \right),
		\end{align}
	where we have assumed exact sub-problem solution (i.e., $\Delta_{n}\equiv 0$) for simplicity of presentation.
	When trust-region is not used (i.e., $r_{n}\equiv \infty$), we can use square-summability of the parameter changes (i.e., RHS of \eqref{eq:sketch_main_ineq}) to deduce that the minimum of the LHS of \eqref{eq:sketch_main_ineq} among the first $n$ iterations decay as $\tilde{O}(n^{-1/2})$.

	However, if we do use trust-region (i.e., $r_{n}<\infty$), then the supremum in the LHS of \eqref{eq:sketch_main_ineq} is taken over a vanishingly small ball around $\param_{n}$. For instance, for square-summable radii, the small ball on LHS has a vanishing radius of order $\tilde{O}(n^{-1/2})$, which is the same order of the RHS of \eqref{eq:sketch_main_ineq}. Hence, one cannot deduce a similar rate of convergence result as in the case when trust-region is not used. A different approach to analysis is needed to establish the rate of convergence of BMM-DR.

    In order to circumvent the above issue, we establish the complexity results by first showing the finite first-order variation between consecutive iterates as in Prop. \ref{prop:finite_range_short_points}. Then in the key Lemma \ref{lem:first_order_optimality}, we connect the optimality measure in \eqref{eq:stationary_approximate} with the first-order variation between iterates. This gives an improved upper bound of the LHS in \eqref{eq:sketch_main_ineq}, which decays at the rate of $\tilde{O}(n^{-1})$. Therefore, even when trust-region is used, we are still able to conclude the complexity result as in Thm. \ref{thm:complexity}.

	For the asymptotic analysis with trust-region, we seek a contradiction after supposing there exists a non-stationary limit point of the set of all estimates $\{\param_{n}:n\ge 0\}$. Such a non-stationary limit point should be contained in an open ball that does not contain any other stationary points and there must be infinitely many iterates outside such ball (Prop. \ref{prop:non-stationary_nbh}). This implies that there are infinitely many `crossings' from near the non-stationary limit point to outside of the ball (see Fig. \ref{fig:pf_DR_illustration}). By using techniques developed for the complexity analysis, we can deduce from this that there exists a stationary point inside the open ball around the non-stationary point (Prop. \ref{prop:stationary_conditions}), which is a contradiction. 
	\begin{figure*}[h]
		\centering
		\includegraphics[width=0.9 \linewidth]{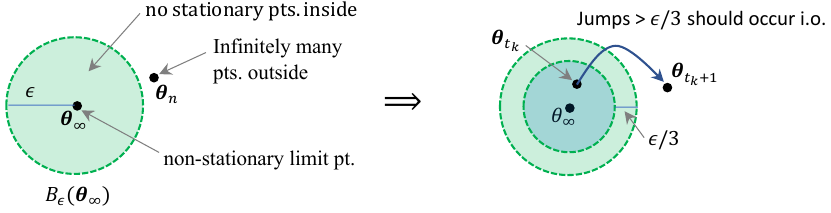}
		\caption{Illustration of the proof of Theorem \ref{thm:complexity} \textbf{(iii)} with diminishing radius.
		}
		\label{fig:pf_DR_illustration}
	\end{figure*}

\vspace{-0.5cm}
	\section{Proof of iteration complexity}
	\label{sec:pf_iteration_complexity}

	We start by recalling a classical lemma on the first-order approximation of functions with Lipschitz gradients. 
	
	\begin{lemma}[First-order approximation of functions with Lipschitz gradient]
		\label{lem:surrogate_L_gradient}
		Let $f:\Omega (\subseteq \R^{p})\rightarrow \R$ be differentiable and $\nabla f$ be $L$-Lipschitz continuous on $\Omega$. Then for each $\theta,\theta'\in \Omega$, $\left| f(\theta') - f(\theta) - \nabla f(\theta)^{T} (\theta'-\theta) \right|\le \frac{L}{2} \lVert \theta-\theta'\rVert^{2}$. 
	\end{lemma}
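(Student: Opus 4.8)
The plan is to reduce the two-variable estimate to a one-dimensional computation along the segment joining $\theta$ and $\theta'$, and then control the resulting integral using the Lipschitz hypothesis together with Cauchy--Schwarz. Concretely, I would introduce the auxiliary function $g(t)=f\bigl(\theta+t(\theta'-\theta)\bigr)$ for $t\in[0,1]$. Since $f$ is differentiable, the chain rule gives $g'(t)=\nabla f\bigl(\theta+t(\theta'-\theta)\bigr)^{T}(\theta'-\theta)$, and because $\nabla f$ is $L$-Lipschitz it is in particular continuous, so $g'$ is continuous on $[0,1]$ and the fundamental theorem of calculus applies. This yields
\begin{align}
f(\theta')-f(\theta)=g(1)-g(0)=\int_{0}^{1}\nabla f\bigl(\theta+t(\theta'-\theta)\bigr)^{T}(\theta'-\theta)\,dt.
\end{align}

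Next I would subtract the linear term. Writing $\nabla f(\theta)^{T}(\theta'-\theta)=\int_{0}^{1}\nabla f(\theta)^{T}(\theta'-\theta)\,dt$ and combining with the identity above gives the key representation
\begin{align}
f(\theta')-f(\theta)-\nabla f(\theta)^{T}(\theta'-\theta)=\int_{0}^{1}\Bigl[\nabla f\bigl(\theta+t(\theta'-\theta)\bigr)-\nabla f(\theta)\Bigr]^{T}(\theta'-\theta)\,dt.
\end{align}
At this point the estimate is essentially done: I take absolute values, move the modulus inside the integral, and bound the integrand pointwise. By Cauchy--Schwarz the integrand is at most $\lVert \nabla f(\theta+t(\theta'-\theta))-\nabla f(\theta)\rVert_{F}\,\lVert \theta'-\theta\rVert_{F}$, and the $L$-Lipschitz property controls the first factor by $L\,\lVert t(\theta'-\theta)\rVert_{F}=Lt\,\lVert \theta'-\theta\rVert_{F}$. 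Integrating $Lt\,\lVert \theta'-\theta\rVert_{F}^{2}$ over $t\in[0,1]$ produces the constant $\tfrac{L}{2}$ and the claimed bound $\tfrac{L}{2}\lVert \theta-\theta'\rVert_{F}^{2}$.

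I do not anticipate a genuine obstacle here, since this is the classical descent lemma; the only points requiring care are routine. First, the application of the fundamental theorem of calculus must be justified, which is why I emphasize that Lipschitz continuity of $\nabla f$ upgrades mere differentiability to continuity of $g'$, making the integral legitimate rather than merely formal. Second, one should note that for vectors in $\R^{p}$ the Frobenius norm coincides with the Euclidean norm, so the operator-style Cauchy--Schwarz step and the Lipschitz bound are stated in compatible norms. With these two technical remarks in place the argument is a direct chain of the three displays above, and no further structural idea is needed.
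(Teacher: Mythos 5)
Your proof is correct and is precisely the classical argument behind the descent lemma that the paper invokes by citation (Nesterov, Lem.\ 1.2.3) rather than proving: parametrize the segment, apply the fundamental theorem of calculus, and bound the integrand via Cauchy--Schwarz and the Lipschitz hypothesis. Since the paper offers no proof of its own beyond the reference, your write-up simply supplies the standard details, and it does so without any gaps.
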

	
	\begin{proof}
		This is a classical lemma. See \cite[Lem 1.2.3]{nesterov1998introductory}.
	\end{proof}

    Next, we will show the iterates are asymptotically exact given the surrogates are strongly convex.
	
	\begin{prop}\label{prop:parameter_gap_decay}
		\hspace{-0.2em}Suppose \ref{assumption:A1} hold and \commHL{$g_{n}^{(i)}$ is $\rho$-strongly convex for some $\rho>0$.} Then $\frac{\rho}{2}\lVert  \theta_{n}^{(i)}-\theta_{n}^{(i\star)} \rVert^{2} \le \Delta_{n}$. 
	\end{prop}
	
	\begin{proof}
		Fix $i\in \{1,\dots,m\}$. There are two cases to consider. First, when \ref{assumption:A3}\textbf{(a)} holds, the assertion follows from a triangle inequality and that both $\theta_{n}^{(i)}$ and $\theta_{n}^{(i\star)}$ are within distance $r_{n}$ from $\theta_{n-1}^{(i)}$ since $r_{n}=o(1)$. Second, suppose \ref{assumption:A3}\textbf{(b)} holds. Then by the first-order optimality (recall that $\Theta^{(i)}_{n}$ in \eqref{eq:def_radius_restricted_constraint} is convex)
		\begin{align}\label{eq:short_point_stationarity1}
			\langle \partial G_{n}^{(i)}(\theta_{n}^{(i\star)}),\, \theta-\theta_{n}^{(i\star)} \rangle \ge 0 \quad \textup{for all $\theta\in \Theta^{(i)}_{n}$ for all $i=1,\dots,m$. }
		\end{align}
		Then by $\rho$-strong convexity of $g_{n}^{(i)}$ in  \ref{assumption:A3}\textbf{(b)} and convexity of $p_{n}^{(i)}$, 
		\begin{align}\label{eq:second_order_growth_G}
			\frac{\rho}{2}\lVert \theta_{n}^{(i)}-\theta_{n}^{(i\star)} \rVert^{2} &\le G_{n}^{(i)}(\theta_{n}^{(i)}) - G_{n}^{(i)}(\theta_{n}^{(i\star)}) - 	\langle \partial G_{n}^{(i)}(\theta_{n}^{(i\star)}),\, \theta_{n}^{(i)}-\theta_{n}^{(i\star)} \rangle \le  \Delta_{n}. 
		\end{align}
	\end{proof}

	\begin{prop}[Monotonicity of objective and Stability of iterates]\label{prop:forward_monotonicity}
		Suppose \ref{assumption:A1}, \ref{assumption:A2}, and \ref{assumption:A4} hold. Then the following hold: 
		\begin{description}[itemsep=0.1cm]
			\item[(i)] $F(\param_{n-1}) - F(\param_{n})  \ge -m\Delta_n$;
			\vspace{0.1cm}

			\item[(ii)] \commHL{$ \sum_{n=1}^{\infty} \sum_{i=1}^{m} G^{(i)}_{n}(\theta^{(i)}_{n-1}) - G^{(i)}_{n}(\theta^{(i\star)}_{n})  \le  F(\param_{0}) - F^{*} +m\sum_{n=1}^{\infty}\Delta_n<\infty$.}

			\vspace{0.1cm}
			\item[(iii)] If $\sum_{n=1}^{\infty} r_{n}^{2}<\infty$, then $\lVert \theta_{n}^{(i)} - \theta_{n-1}^{(i)} \rVert \le r_{n}$ for all $i,n$ and $\sum_{n=1}^{\infty} \lVert \param_{n}-\param_{n-1} \rVert^{2} 	 \le \sum_{n=1}^{\infty} r_{n}^{2} <\infty$. If 
        $g_{n}^{(i)}$ is $\rho$-strongly convex for some $\rho>0$ for all $i,n$, then
		\end{description}
  \commHL{
  \vspace{-0.5cm}
		\begin{align}\label{eq:parameter_square_sum_bd}
				\frac{\rho}{4}	\sum_{n=1}^{\infty}   \lVert \param_{n}-\param_{n-1} \rVert^{2}  \le   F(\param_{0}) - F^{*} +2m\sum_{n=1}^{\infty}\Delta_n<\infty.
			\end{align}
   }
		\hspace{1.1cm} In particular, in both cases, $\lVert \theta_{n-1}^{(i)} - \theta_{n}^{(i)}\rVert=o(1)$ for all $i=1,\dots,m$.
		
	\end{prop}
	
	\begin{proof}
		Fix $i\in \{1,\dots,m\}$. Note that 
	\begin{align}\label{eq:monotonicity_g_comparison}
	\quad	F^{(i)}_{n}(\theta^{(i)}_{n-1}) - F^{(i)}_{n}(\theta^{(i)}_{n}) &\overset{(a)}{=}   G^{(i)}_{n}(\theta^{(i)}_{n-1}) - G^{(i)}_{n}(\theta^{(i)}_{n})   +G^{(i)}_{n}(\theta^{(i)}_{n}) -  F^{(i)}_{n}(\theta^{(i)}_{n}) \nonumber \\
    &\overset{(b)}{\ge} G^{(i)}_{n}(\theta^{(i)}_{n-1}) - G^{(i)}_{n}(\theta^{(i\star)}_{n}) - \Delta_{n}.
	\end{align}
	where (a) follows from $G_{n}^{(i)}(\theta_{n-1}^{(i)})=F_{n}^{(i)}(\theta_{n-1}^{(i)})$ and  that $g_{n}^{(i)}$ majorizes $f_{n}^{(i)}$ so that $G_{n}^{(i)}$ majorizes $F_{n}^{(i)}$; (b)  follows from the definition of $\Delta_{n}$ in \eqref{eq:def_sub_optimality_gap}. 
    Summing over all $i=1,\dots,m$, it follows that 
		\begin{align}
			F(\param_{n-1}) - F(\param_{n}) 
			&  = \sum_{i=1}^{m} F_{n}^{(i)}(\theta^{(i)}_{n-1}) -  F_{n}^{(i)}(\theta^{(i)}_{n}) \\
			&  \geq -m \Delta_n +  \sum_{i=1}^{m}G^{(i)}_{n}(\theta^{(i)}_{n-1}) -  G^{(i)}_{n}(\theta^{(i\star)}_{n}) \ge -m\Delta_n,
			\quad  \label{eq:proximal_monotonicity}
		\end{align}
  where the last inequality uses the definition of $\theta_{n}^{(i\star)}$. 
		This shows \textbf{(i)}. Note that \textbf{(ii)} follows by adding up the first inequality above for $n\ge 1$.

		Lastly, we show \textbf{(iii)}. If $\sum_{n=1}^{\infty} r_{n}^{2}<\infty$,  then the assertion follows immediately. Otherwise, suppose the surrogates $g_{n}^{(i)}$ are $\rho$-strongly convex for some $\rho>0$. 
        Then $G_{n}^{(i)}=g_{n}^{(i)}+p_{n}^{(i)}$ is also $\rho$-strongly convex since $p_{n}^{(i)}$ is convex, so by the second-order growth property (see \eqref{eq:second_order_growth_G}) and the definition of $\Delta_{n}$,
        \begin{align}
             \frac{\rho}{2} \lVert \theta_{n-1}^{(i)} - \theta_{n}^{(i\star)}  \rVert^{2} \le G_{n}^{(i)}(\theta_{n-1}^{(i)}) - G_{n}^{(i)}(\theta_{n}^{(i\star)}).
        \end{align}
        Then by \textbf{(ii)}, 
        \begin{align}\label{eq:theta_theta_star_gap_finite}
            \sum_{n=1}^{\infty} \frac{\rho}{2} \lVert \theta_{n-1}^{(i)} - \theta_{n}^{(i\star)}  \rVert^{2}  \le   F(\param_{0}) - F^{*} +m\sum_{n=1}^{\infty}\Delta_n<\infty.
        \end{align}
        Then by Young's inequality and Prop. \ref{prop:parameter_gap_decay}, we can deduce \eqref{eq:parameter_square_sum_bd}. 
	\end{proof}

	\begin{prop}[Boundedness of iterates]\label{prop:boundedness}
		Assume \ref{assumption:A1}, \ref{assumption:A2}, \ref{assumption:A4}, and either \ref{assumption:A3}\textbf{(a)} or \ref{assumption:A3}\textbf{(b)} hold. Then there exists compact and convex subsets $S^{(i)}\subseteq \Theta^{(i)}$ for $i=1,\dots,m$ such that $\Param_{0}:=S^{(1)}\times \dots \times S^{(m)}$ contains $ \bigcup_{n=0}^{\infty} B_{\le 1}(\param_{n})$, 
  where $B_{\le 1}(x):=\{ y\in \Param\,:\, \lVert x-y \rVert\le 1 \}$. Consequently, $\nabla f$ and $p$ are $L$-Lipschitz continuous on $\Param_{0}$ for some $L>0$.  
	\end{prop}
	
	\begin{proof}
		Let $T:=m\sum_{k=1}^{\infty} \Delta_{k}$, which is finite by  \ref{assumption:A4}. Recall that by Proposition \ref{prop:forward_monotonicity}, we have $\sup_{n\ge 0} F(\param_{n}) \le F(\param_{0}) +T<\infty$. It follows that $\{ \param_{n}\,:\, n\ge 0 \}$ is a subset of the sub-level set $A_{0}:=F^{-1}((-\infty, F(\param_{0})+T])$, which is compact by  \ref{assumption:A2}.
        Let $\Pi^{(i)}$ denote the projection from $\Param$ to its $i$th block component $\Theta^{(i)}$. Then $\Pi^{(i)}(A_{0})$ is a compact subset of $\Theta^{(i)}$. Take $R^{(i)}$ to be the `unit fattening' of this compact subset: 
  \begin{align}
        R^{(i)}:= \left\{ \theta\in \Theta^{(i)}\,:\, \lVert \theta-\theta'\rVert\le 1 \,\, \textup{for some $\theta'\in \Pi^{(i)}(A_{0})$}  \right\}. 
  \end{align}
  Now let $S^{(i)}$ be the convex hull of $R^{(i)}$ for $i=1,\dots,m$. Then $S^{(i)}$ is closed and bounded, so is also a compact subset of $\Theta^{(i)}$. 
  The claimed containment follows from the construction. The second part of the assertion follows from the first part along with \ref{assumption:A1}.
	\end{proof}
	
	\commHL{The following proposition shows the summability of $-V(\param_{n+1},\param_{n})$.}

	\begin{prop}[Finite first variation I]\label{prop:finite_range_short_points}
		Assume \ref{assumption:A1}-\ref{assumption:A4} hold and let $L>0$ be as in Proposition \ref{prop:boundedness}.  
        Then 
		\begin{align}
			\sum_{n=0}^{\infty}   
            \commHL{-V(\param_{n+1},\param_{n})}
            & \le F(\param_{0}) - F^{*} + \frac{L}{2}\sum_{n=0}^{\infty} \lVert \param_{n}-\param_{n+1} \rVert^{2}. 
		\end{align}
	\end{prop}
	
	\begin{proof}
        By \ref{assumption:A2} and Proposition \ref{prop:boundedness}, $\nabla f$ is $L$-Lipschitz continuous on the compact subset $\Param_{0}\subseteq \Param$ (in Prop. \ref{prop:boundedness}) that contains all iterates $\param_{n}$, $n\ge 0$. 
		Hence by Lemma \ref{lem:surrogate_L_gradient}, for all $n\ge 0$,
\begin{align}
      \left\langle \nabla f(\param_{n+1}),\,  \param_{n} - \param_{n+1}  \right\rangle	&\le f(\param_{n})-f(\param_{n+1}) + \frac{L}{2}\lVert \param_{n} - \param_{n+1} \Vert^{2} \\ 
      & =F(\param_{n})-F(\param_{n+1}) \commHL{-  p(\param_{n}) + p(\param_{n+1})} + \frac{L}{2}\lVert \param_{n} - \param_{n+1} \Vert^{2}.
\end{align}
Adding up the above inequality for all $n\ge 0$ shows the assertion. 
\end{proof}

	Next, we show a key lemma for establishing the iteration complexity of our algorithm. \commHL{This lemma gives an upper bound of $\widetilde{V}(\param_{n}, \param)$ for any $\param\in \Param$ with $\lVert \param-\param_{n} \rVert\le 1$.}

	\begin{lemma}[Key lemma for iteration complexity]\label{lem:first_order_optimality}
		Assume \ref{assumption:A1}-\ref{assumption:A4} hold.  Let $b_{n}\in  [0, \min\{ r_{n}, 1\}]$ for all $n\ge 1$. Then
  \begin{align}\label{eq:optimality1_proximal0_1}
 \hspace{1cm}  b_{n+1}	\sup_{\param\in \Param,\, \lVert \param-\param_{n} \rVert\le 1} \commHL{\widetilde{V}(\param_{n}, \param)} &\le  \commHL{-V(\param_{n+1},\param_{n})} + 3m L b_{n+1} \lVert \param_{n}-\param_{n+1} \rVert  \\
  &   + \left( \frac{L}{2} + mL \right) \lVert \param_{n+1}-\param_{n} \rVert^{2} + \frac{\revision{L_g} b_{n+1}^{2}}{2} + m\Delta_{n+1}
		\end{align}
  for all $n\ge 1$ for some constant $L>0$. 
	\end{lemma}

	\begin{proof}
		 Fix $n\ge 0$ and let $\param=[\theta^{(1)},\dots,\theta^{(m)}]\in \Param$ be such that $\lVert \param - \param_{n} \rVert \le \min\{b_{n+1},1\}$. Then we have 
\begin{align}
			G_{n+1}^{(i)}( \theta^{(i)}_{n+1} )    -\Delta_{n+1} &\le 
			G_{n+1}^{(i)}( \theta^{(i\star)}_{n+1} )    \le G_{n+1}^{(i)}( \theta^{(i)} ). 
		\end{align}
	Let $\Param_{0}=S^{(1)}\times \dots \times S^{(m)}$ and $L>0$ be as in Proposition \ref{prop:boundedness}. 
    Recall that $\nabla g_{n+1}^{(i)}(\theta_{n}^{(i)})=\nabla f_{n+1}^{(i)}(\theta_{n}^{(i)})$ by definition of the majorizing surrogates. Hence by subtracting $f_{n+1}^{(i)}(\theta_{n}^{(i)})$ from both sides and applying the $\revision{L_g}$-smoothness of $g_{n+1}^{(i)}$ on $S^{(i)}$ and Lemma \ref{lem:surrogate_L_gradient}, we get
\begin{align}
			\left\langle \nabla f_{n+1}^{(i)}(\theta_{n}^{(i)}),\,  \theta_{n+1}^{(i)}-\theta_{n}^{(i)}  \right\rangle &\le \left\langle \nabla f_{n+1}^{(i)}(\theta_{n}^{(i)}),\,  \theta^{(i)}-\theta_{n}^{(i)}\right\rangle \commHL{+p(\theta; \param_{n+1;i})-p(\param_{n+1;i-1}) }  \\
			&\hspace{-3cm} \commHL{+ p(\param_{n+1;i-1}) -p(\param_{n+1;i})}+ \frac{L}{2}\lVert \theta_{n+1}^{(i)} - \theta_{n}^{(i)} \rVert^{2}+ \frac{\revision{L_g}}{2}\lVert \theta^{(i)} - \theta_{n}^{(i)} \rVert^{2}  +  \Delta_{n+1}.
		\end{align}

\noindent Denote $\nabla f_{n+1}(\param_n):=\left[ \nabla f_{n+1}^{(1)}(\theta_{n}^{(1)}),\dots,\nabla f_{n+1}^{(m)}(\theta_{n}^{(m)}) \right]$. Adding up these inequalities for $i=1,\dots,m$, 
		\begin{align}
			\left\langle \nabla f_{n+1}(\param_n),\, \param_{n+1} - \param_{n} \right\rangle &\le   \left\langle \nabla f_{n+1}(\param_n),\,  \param - \param_{n} \right\rangle \commHL{+ \sum_{i=1}^m p(\theta; \param_{n+1;i})-p(\param_{n+1;i-1}) }
		 \\
   &  \hspace{-1cm} \commHL{+p(\param_{n})-p(\param_{n+1}) } + \frac{L}{2}\lVert \param_{n+1} - \param_{n}  \rVert^{2}  + \frac{\revision{L_g}}{2}\lVert \param - \param_{n}\rVert^{2}  + m \Delta_{n+1}.
		\end{align}
        By Lipschitz continuity of $\nabla f$ on $\Param_{0}$, 
		\begin{align}
			\lVert   \nabla_{i} f( \param_{n} ) - \nabla f_{n+1}^{(i)}(\theta_{n}^{(i)}) \rVert ,\, \lVert   \nabla_{i} f( \param_{n+1} ) - \nabla f_{n+1}^{(i)}(\theta_{n}^{(i)}) \rVert & \le L \lVert \param_{n}-\param_{n+1} \rVert.
		\end{align}
		Noting that $\lVert \param-\param_{n}\rVert \le b_{n+1}$, 
  we can deduce 
		\begin{align}\label{eq:key_eq1}
  \qquad V(\param_{n+1},\param_{n})
   &\le   \left\langle \nabla f(\param_{n}),\, \param-\param_{n} \right\rangle +\sum_{i=1}^m p(\theta; \param_{n+1;i})-p(\param_{n+1;i-1}) 
   \\
			& 
        \hspace{-1.5cm} + \left( \frac{L}{2} + mL \right) \lVert \param_{n+1}-\param_{n} \rVert^{2} + \frac{\revision{L_g}b_{n+1}^{2}}{2} + m\Delta_{n+1}+ m L b_{n+1} \lVert \param_{n}-\param_{n+1} \rVert.
		\end{align}

 Now fix arbitrary $\param'\in \Param$ such that $\|\param' - \param_n\|\le 1$. By convexity of $\Param$, $\param := b_{n+1}\param' +(1-b_{n+1})\param_n \in \Param$ and $\|\param-\param_n\|\le b_{n+1}$. \commHL{Then by convexity of $p^{(i)}_{n+1}$, we have $p^{(i)}_{n+1}(\theta^{(i)})\le b_{n+1}p( \theta' ; \param_{n+1;i} )+ (1-b_{n+1})p(\param_{n+1;i-1})$. Therefore 
\begin{align}
    \sum_{i=1}^m (p(\param_{n+1;i-1})-p(\theta; \param_{n+1;i}) 
    &\ge b_{n+1}\sum_{i=1}^m (p(\param_{n+1;i-1})-p(\theta'; \param_{n+1;i})).
\end{align}
Hence by noting $\param - \param_n = b_{n+1}(\param'-\param_n)$, \eqref{eq:key_eq1} yields for  all $\param'\in \Param$ with  $\|\param' - \param_n\|\le 1$,
\begin{align}\label{eq:key_eq2}
		&	b_{n+1}	\left(  \left\langle -\nabla f(\param_{n}),\, \param'-\param_{n} \right\rangle +\sum_{i=1}^m (p(\param_{n+1;i-1})-p(\theta'; \param_{n+1;i})) \right) \\
   & \quad \le  
   -V(\param_{n+1}, \param_{n}) + m L b_{n+1} \lVert \param_{n}-\param_{n+1} \rVert  \\
			&\quad \qquad + \left( \frac{L}{2} + mL \right) \lVert \param_{n+1}-\param_{n} \rVert^{2} + \frac{\revision{L_g} b_{n+1}^{2}}{2} + m\Delta_{n+1}.
		\end{align}
}
Lastly, by using $L$-Lipschitz continuity of $p$ on $\Param_{0}$, we can replace the left-hand side above by $b_{n+1}\widetilde{V}(\param_{n},\param')$ with an additive error of $2b_{n+1}L \lVert \param_{n+1}-\param_{n} \rVert$. This 
gives the assertion.
	\end{proof}

	\begin{lemma}\label{lem:positive_convergence_lemma}
		\hspace{-0.5em}Let $(a_{n})_{n\ge 0}$ and $(b_{n})_{n \ge 0}$ be sequences of nonnegative real numbers such that $\sum_{n=0}^{\infty} a_{n}b_{n} <\infty$. Then $\min_{1\le k\le n} b_{k} \le \frac{\sum_{k=0}^{\infty} a_{k}b_{k}}{\sum_{k=1}^{n} a_{k}}  = O\left( \left( \sum_{k=1}^{n} a_{k} \right)^{-1} \right)$. 
	\end{lemma}
	
	\begin{proof}
		The assertion follows from noting that
		\begin{align}
			\left( \sum_{k=1}^{n}a_{k} \right) \min_{1\le k \le n} b_{k}\le \sum_{k=1}^{n} a_{k}b_{k} \le  \sum_{k=1}^{\infty} a_{k}b_{k} <\infty.
		\end{align}
	\end{proof}
	
	Now we are ready to derive the iteration complexity in Theorem \ref{thm:complexity} \textbf{\textup{(i)}}-\textbf{\textup{(ii)}}.
	
	\vspace{0.1cm}
	\begin{proof}[\text{Proof of Theorem \ref{thm:complexity}} \textbf{\textup{(i)}}-\textbf{\textup{(ii)}}]
		Suppose $b_{n}\in [0, \min\{ r_{n},1 \}]$ is such that \newline $\sum_{n=1}^{\infty} b_{n}^{2}<\infty$. Introduce the following notations: $A:= F(\param_{0})-F^{*}$ and 
		\begin{align}	\label{eq:ABCDE}
		B:=\sum_{n=1}^{N} \Delta_{n},\,  C:= \sum_{n=1}^{N} \lVert \param_{n}-\param_{n-1} \rVert^{2} ,\, D:= \sum_{n=1}^{N} b_{n}^{2},  \, E:= \sum_{n=1}^{N} r_{n}^{2}.
		\end{align}
		Then $A\in [0,\infty)$ by \ref{assumption:A2}, $B\in [0,\infty)$ by \ref{assumption:A4} and  $D\in [0,\infty)$ by the hypothesis. 
        Furthermore, by Cauchy-Schwarz inequality, 
		\begin{align}
			\sum_{n=0}^{N} b_{n+1} \lVert \param_{n}-\param_{n+1} \rVert  \le  \sqrt{C D}. 
		\end{align}
		Now summing the inequalities  in \eqref{eq:optimality1_proximal0_1} in Lemma \ref{lem:first_order_optimality} for all $n\ge 0$ and using Proposition \ref{prop:finite_range_short_points}, we get \begin{align}\label{eq:iteration_complexity_finite_sum_pf}
			&\sum_{n=0}^{N}  b_{n+1}	\sup_{\param\in \Param,\, \lVert \param-\param_{n} \rVert\le 1}  
            \widetilde{V}(\param_{n},\param) 
        \le M_{0}, 
		\end{align}
		where the constant $M_{0}$ is defined as 
        \begin{align}\label{eq:M0}
        M_{0}:= 
        A + m B +  (m+1)L C  + \frac{L_{g}}{2} D  + 3mL \sqrt{C D}. 
        \end{align}
Next, by Prop. \ref{prop:forward_monotonicity} and \ref{assumption:A3}, we have
        \begin{align}\label{eq:C_A3}
        C\le \begin{cases}
				\revision{E \land  \frac{4}{\rho}(A + 2mB)} &\hspace{-0.1em}\textup{if \ref{assumption:A3}\textbf{(a)} holds,} \\
				\frac{4}{\rho}(A + 2mB) &\hspace{-0.1em}\textup{if \ref{assumption:A3}\textbf{(b)} holds}.
			\end{cases}
        \end{align}
{\color{mycolor}Therefore, for either case we have $C\in [0,\infty)$ and hence $M_0<\infty$. Now Theorem \ref{thm:complexity} \textbf{(i)} is a direct consequence of \eqref{eq:iteration_complexity_finite_sum_pf} and Lemma \ref{lem:positive_convergence_lemma}. Noting when \ref{assumption:A3}\textbf{(a)} holds we have $D\le E$. Combining \eqref{eq:M0} and \eqref{eq:C_A3} gives
\begin{align}\label{eq:M0_bound}
\small
        M_0 \le \begin{cases}
        M_1 + \frac{L(m+1)\left( 4A + 8mB \right)}{\rho} +  \frac{6mL\sqrt{(A+2mB)D}}{\rho^{1/2}} + \frac{L_g}{2} D &\hspace{-0.1em}\textup{if \ref{assumption:A3}\textbf{(b)} holds},\\
				M_1 + \min\big\{(4m+1)L E, H(\rho)\big\} + \frac{L_g}{2} E , &\hspace{-0.1em}\textup{if \ref{assumption:A3}\textbf{(a)} holds,} 	
			\end{cases}
        \end{align}
where $H(\rho)$ denotes the sum of the second and the third terms in the first case, $M_1=A+mB$ is a constant independent of $L_g$ and $\rho$. 
   } 

		Next, \textbf{(ii)} is a direct consequence of \textbf{(i)}. Indeed, if $r_{n}=1/(\sqrt{n}\log n)$, then the  upper bound on the rate of convergence in \textbf{(i)} is of order $O(1/\sum_{k=1}^{n} k^{-1/2}/(\log k) )= O(\log n \big/ 2\int_{1}^{n} x^{-1/2}\,dx)=O(\frac{\log n}{2\sqrt{n}})$. Similarly, if $r_{n}\equiv \infty$, then we can choose $b_{n} =n^{-1/2}/(\log n) $ for $n\ge 1$. Then we have the same rate of convergence in \textbf{(i)}. Then one can conclude by using the fact that $n\ge 2\eps^{-1} (\log \eps^{-1})^{2}$ implies $(\log n)^{2}/n \le \eps$ for all sufficiently small $\eps>0$.
	\end{proof}
\begin{remark}[Remark on diminishing radius]
     A direct comparison of the constants in \eqref{eq:M0_bound} shows the advantage of diminishing radius for improving the complexity bound by eliminating terms with $\rho^{-1}$, which is significant when $\rho$ is small. When $\rho$ is large, the algorithm with diminishing radius performs at least as well as the one without DR. Note that a large $\rho$ will be penalized by $L_g$ since $L_g\ge \rho$.  Moreover, when implementing diminishing radius, one cannot take $r_n$ arbitrarily small since $\sum_{n=1}^\infty b_n$ appears in the denominator of the complexity.
    Furthermore, the choice of diminishing radius $r_n$ (and the corresponding $b_n$) in the proof of Theorem \ref{thm:complexity} is not limited to $r_n = 1/(\sqrt{n}\log n)$.
\end{remark}

	\section{Proof of asymptotic stationarity} 
	\label{sec:pf_asymptotic_stationarity}

	Recall that after the update $\param_{n-1}\mapsto \param_{n}$,  each block coordinate of $\param_{n}$ and $\param_{n}^{\star}$ are within distance $r_{n}$ from the corresponding block coordinate of $\param_{n-1}$. For each $n\ge 1$, we say $\param_{n}^{\star}$ is a \textit{short point} if all of its block coordinates are strictly within $r_{n}$ from the corresponding block coordinate of $\param_{n-1}$, and  $\param_{n}^{\star}$ is said to be a \textit{long point} otherwise. Observe that if $\param_{n}^{\star}$ is a short point, then imposing the search radius restriction in Algorithm \eqref{eq:BMM_DR_highlevel} has no effect and $\param_{n}^{\star}$ is obtained from $\param_{n-1}$ by a single cycle of exact block majorization-minimization on the constraint set $\Param$. In particular, this holds if $r_{n}=\infty$ since then every $\param_{n}^{\star}\in \Lambda^{\star}$ must be a short point. 
	
    For simplicity of notations, let $h_{n}^{(i)} = g_{n}^{(i)} - f_{n}^{(i)}$ denote the majorization gap. In the following proposition, we show the majorization gap is vanishing.
	\begin{prop}[Vanishing gradients of the majorization gap]\label{prop:gradient_function_bd}
		Suppose \ref{assumption:A1} - \ref{assumption:A3} hold. Then there exists a constant $L_{h}>0$ such that for all $n\ge 1$ and $i=1,\dots,m$, 
		\begin{align}
   \lVert \nabla h_{n}^{(i)}(\theta_{n}^{(i)})  \rVert \le L_{h} \| \theta_{n}^{(i)}- \theta_{n-1}^{(i)}\|. 
		\end{align}
	\end{prop}
	
	\begin{proof}
		Write $h=h_{n}^{(i)}$. Let $\Param_{0}=S^{(1)}\times \dots \times S^{(m)}$ by as in Proposition \ref{prop:boundedness}. Then $\param_{n}\in \Param_{0}$ for all $n\ge 1$. 
  Since $\theta_{n}^{(i)}, \theta_{n-1}^{(i)}\in S^{(i)}$ and $\nabla h(\theta_{n-1}^{(i)})=0$, we get 
  \begin{align}
      \lVert \nabla h(\theta_{n}^{(i)})  \rVert &= \lVert \nabla h(\theta_{n}^{(i)})- \nabla h(\theta_{n-1}^{(i)})  \rVert\\ &= \| \nabla g_{n}^{(i)}(\theta_{n}^{(i)}) - \nabla f_{n}^{(i)}(\theta_{n}^{(i)}) -\nabla g_{n}^{(i)}(\theta_{n-1}^{(i)}) + \nabla f_{n}^{(i)}(\theta_{n-1}^{(i)} )\|\\
      &\le (L_g + L) \| \theta_{n}^{(i)}- \theta_{n-1}^{(i)}\|.
  \end{align}
	\end{proof}

	\begin{prop}[Sufficient condition for stationarity I]\label{prop:long_points_stationary}
		Assume \ref{assumption:A1}-\ref{assumption:A4} hold. If $(\param_{n_{k}}^{\star})_{k\ge 1}$ is a convergent subsequence of $(\param_{n}^{\star})_{n\ge 1}$  consisting of short points, then $\lim_{k\rightarrow\infty} \param_{n_{k}} = \lim_{k\rightarrow\infty} \param_{n_{k}}^{\star}=:\param_{\infty}$ and $\param_{\infty}$ is a stationary-Nash point of $F$ over $\Param$. 
	\end{prop}
	
	\vspace{-0.3cm}
	\begin{proof}

		Note that  if $\param_{n}^{\star}$ is a short point, then each block coordinate $\theta_{n}^{(i\star)}$ lies in the interior of $\lVert \param-\param_{n-1}^{(i)} \rVert < r_{n}$, the trust region of radius $r_{n}$. Hence by the first-order optimality condition for $\theta_{n}^{(i\star)}$, 
  for all $\theta\in \Theta^{(i)}$ for all $i=1,\dots,m$,
    \begin{align}\label{eq:short_point_stationarity}
			\langle \nabla g_{n}^{(i)}(\theta_{n}^{(i\star)}),\, \theta-\theta_{n}^{(i\star)} \rangle + \commHL{p_n^{(i)}(\theta)-p_n^{(i)}(\theta_{n}^{(i\star)}) \ge \langle \partial G_{n}^{(i)}(\theta_{n}^{(i\star)}),\, \theta-\theta_{n}^{(i\star)} \rangle} \ge 0.
		\end{align}

		We wish to show that $\param_{\infty}$ is a stationary-Nash point of $F$ over $\Param$. Denote $\param_{\infty}=[\theta_{\infty}^{(1)},\dots,\theta_{\infty}^{(m)}]$. 	Fix $i\in \{1,\dots,m\}$ and $\theta\in \Theta^{(i)}$ with $\|\theta -\theta_\infty^{(i)}\|\le 1$. By Proposition \ref{prop:boundedness} and continuous differentiability of $f$ in \ref{assumption:A2}, there exists a constant $C>0$ such that $\lVert \nabla f_{n}^{(i)}(\theta_{n}^{(i)}) \rVert\le C$ for all $n,i$. Then by using Cauchy-Schwartz inequality and Proposition \ref{prop:gradient_function_bd},  
		\begin{align}
			&\hspace{-1cm}\langle \nabla f_{n}^{(i)}(\theta_{n}^{(i)}),\, \theta-\theta_{n}^{(i)} \rangle \\
			\ge& 	\langle \nabla f_{n}^{(i)}(\theta_{n}^{(i)}),\, \theta-\theta_{n}^{(i\star)} \rangle - \lVert \nabla f_{n}^{(i)}(\theta_{n}^{(i)}) \rVert\, \lVert \theta_{n}^{(i)} - \theta_{n}^{(i\star)} \rVert \\ 
			\ge& \langle \nabla g_{n}^{(i)}(\theta_{n}^{(i)}),\, \theta-\theta_{n}^{(i\star)} \rangle - \lVert \nabla h_{n}^{(i)}(\theta_{n}^{(i)}) \rVert \, \lVert \theta - \theta_{n}^{(i)}   \rVert  -  C\, \lVert \theta_{n}^{(i)} - \theta_{n}^{(i\star)} \rVert \\
          \ge& 	\langle \nabla g_{n}^{(i)}(\theta_{n}^{(i)}),\, \theta-\theta_{n}^{(i\star)} \rangle - L_{h}\lVert \theta_{n}^{(i)}-\theta_{n-1}^{(i)} \rVert \, \lVert \theta - \theta_{n}^{(i)}   \rVert  -  C\, \lVert \theta_{n}^{(i)} - \theta_{n}^{(i\star)} \rVert.
		\end{align}

		We apply the last inequality for $n$ replaced with $n_{k}$. By Proposition \ref{prop:parameter_gap_decay},  we have $\param_{n_{k}}\rightarrow \param_{\infty}$ as $k\rightarrow\infty$. Also by Proposition \ref{prop:forward_monotonicity}, we have $\lVert \param_{n-1}-\param_{n}\rVert= o(1)$. So $\param_{n_{k}-1}\rightarrow \param_{\infty}$ as $k\rightarrow\infty$. Since $\lVert \theta - \theta_{\infty}^{(i)} \rVert\le 1$, it follows that $\lVert \theta - \theta_{n_{k}}^{(i)} \rVert\le 2$ for all sufficiently large $k$. Then by using continuity of $\nabla f$, $\nabla g_{n}^{(i)}$ and $p_{n}^{(i)}$ as well as \eqref{eq:short_point_stationarity}, 

\begin{align} 
			&\langle \nabla_{i} f(\param_{\infty}), \, \theta-\theta_{\infty}^{(i)} \rangle + p_{\infty}^{(i)}(\theta) - p_{\infty}^{(i)}(\theta_{\infty}^{(i)})\\
   &=
			\liminf_{k\rightarrow\infty}   	\left(\langle \nabla f_{n_{k}}^{(i)}(\theta_{n_{k}}^{(i)}),\, \theta-\theta_{n_{k}}^{(i)} \rangle + p_{n_{k}}^{(i)}(\theta) - p_{n_{k}}^{(i)}(\theta_{n_{k}}^{(i)}) \right) \\
			&\ge 	\liminf_{k\rightarrow\infty} \left(  	\langle \nabla g_{n_{k}}^{(i)}(\theta_{n_{k}}^{(i\star)}),\, \theta-\theta_{n_{k}}^{(i\star)}\rangle+ p_{n_{k}}^{(i)}(\theta) - p_{n_{k}}^{(i)}(\theta_{n_{k}}^{(i)}) \right)  \ge 0,
		\end{align}
	\noindent where $p_{\infty}^{(i)}(\theta) := p(\theta_{\infty}^{(1)},\cdots,\theta_{\infty}^{(i-1)},\theta,\theta_{\infty}^{(i+1)},\cdots,\theta_{\infty}^{(m)})$. This holds for all $i$ so we can conclude.
 
	\end{proof}

	Now we can deduce Theorem \ref{thm:complexity} \textbf{(iii)} for the case of BMM with strongly convex surrogates without diminishing radius.

	\begin{proof}[\text{Proof of Theorem \ref{thm:complexity} \textup{(iii)} under \ref{assumption:A3}\textbf{\textup{(b)}}}]

		By Proposition \ref{prop:boundedness}, the sequence of iterates $(\param_{n})_{n\ge 1}$ is bounded. Hence it suffices to show that every limit point of this sequence is a stationary-Nash point of $F$ over $\Param$. To this effect, let $(\param_{n_{k}})_{k\ge 1}$ denote an arbitrary convergent subsequence of $(\param_{n})_{n\ge 1}$. Denote $\param_{\infty}:=\lim_{k\rightarrow\infty} \param_{n_{k}}$. By Proposition \ref{prop:parameter_gap_decay}, we have $\param_{n_{k}}^{\star}\rightarrow \param_{\infty}$ as $k\rightarrow \infty$.  Since $r_{n}\equiv \infty$ under \ref{assumption:A3}\textbf{(b)}, each $\param_{n_{k}}^{\star}$ is a short point. Thus by Proposition \ref{prop:long_points_stationary}, $\param_{\infty}$ is a stationary-Nash point of $F$ over $\Param$, as desired. 
	\end{proof}
	
	In the remainder of this section, we prove Theorem \ref{thm:complexity}\textbf{(iii)} under \ref{assumption:A3}\textbf{(a)}. The key technical difficulty is to handle a sequence of inexact iterates $(\param_{n})_{n\ge 1}$ where the exact iterates $\param_{n}^{\star}$ can either be short or long points due to the non-degenerate radius $r_{n}<\infty$ of the trust-region. 

	\begin{prop}[Finite first-order variation II]\label{prop:finite_range_short_points2}
		Assume \ref{assumption:A1}, \ref{assumption:A2}, \ref{assumption:A3}\textbf{(b)} and \ref{assumption:A4} hold.  Then $ \sum_{n=0}^{\infty} \widetilde{V}(\param_{n}, \param_{n+1}^{\star})
			<\infty$. 
	\end{prop}
	
	\begin{proof}
        Let $L_{g}:=L+L_{h}>0$ as in Proposition \ref{prop:boundedness}. 
		We first note that since $\nabla f_{n+1}^{(i)}(\theta_{n}^{(i)})=\nabla g_{n+1}^{(i)}(\theta_{n}^{(i)})$, 
		\begin{align}
			\left\langle \nabla f_{n+1}^{(i)}(\theta_{n}^{(i)}) ,\, \theta_{n}^{(i)} - \theta_{n+1}^{(i\star)}  \right\rangle  
&=  \left\langle \nabla g_{n+1}^{(i)}(\theta_{n}^{(i)}) ,\, \theta_{n}^{(i)} - \theta_{n+1}^{(i\star)}  \right\rangle  \\       
			& \hspace{-3cm} \overset{(a)}{\le} \frac{L_{g}}{2} \lVert \theta_{n}^{(i)} - \theta_{n+1}^{(i\star)} \rVert^{2}  + G_{n+1}^{(i)}(\theta_{n}^{(i)}) - G_{n+1}^{(i)}(\theta_{n+1}^{(i\star)}) \commHL{-p_{n+1}^{(i)}(\theta_{n}^{(i)}) + p_{n+1}^{(i)}(\theta_{n+1}^{(i\star)})}  \\
			& \hspace{-3cm} \overset{(b)}{\le}  \frac{L_{g} r_{n+1}^{2}}{2}  + G_{n+1}^{(i)}(\theta_{n}^{(i)}) -  G_{n+1}^{(i)}(\theta_{n+1}^{(i)})   +\Delta_{n+1}\commHL{- p_{n+1}^{(i)}(\theta_{n}^{(i)}) + p_{n+1}^{(i)}(\theta_{n+1}^{(i\star)})} ,
		\end{align} 
		where (a) follows from Lemma \ref{lem:surrogate_L_gradient} and \ref{assumption:A3} and definition of $G_{n+1}^{(i)}$ and (b) follows from the trust-region constraint and the definition of $\Delta_{n+1}$  in \eqref{eq:def_sub_optimality_gap}. 
		
		\commHL{Next, recalling  $G_{n+1}^{(i)}(\theta_{n}^{(i)})=F_{n+1}^{(i)}(\theta_{n}^{(i)})$ and $G_{n}^{(i)}\ge F_{n}^{(i)}$, we get 
		\begin{align}
			\sum_{i=1}^{m}	G_{n+1}^{(i)}(\theta_{n}^{(i)}) - G_{n+1}^{(i)}(\theta_{n+1}^{(i)}) &= 		\sum_{i=1}^{m} F_{n+1}^{(i)}(\theta_{n}^{(i)}) - G_{n+1}^{(i)}(\theta_{n+1}^{(i)}) \\
			&\le  		\sum_{i=1}^{m} F_{n+1}^{(i)}(\theta_{n}^{(i)}) - F_{n+1}^{(i)}(\theta_{n+1}^{(i)}) 
			 = F(\param_{n}) - F(\param_{n+1}). 
		\end{align}		
		Then summing the above inequality over $n\ge 0$ is at most 
    $F(\param_{0})-F^{*}$, so this shows
		\begin{align}\label{eq:key_lemma_II_claim1}
			\sum_{n=0}^{\infty}	\sum_{i=1}^{m}	G_{n+1}^{(i)}(\theta_{n}^{(i)}) - G_{n+1}^{(i)}(\theta_{n+1}^{(i)}) \le F(\param_{0}) - F^{*}. 
		\end{align}
        }
		
		Lastly, using $L$-Lipschitz continuity of $\nabla f$ on $\Param_{0}$ (see Prop \ref{prop:boundedness} and \ref{assumption:A2}), 
		\begin{align}
			&\hspace{-1cm}\sum_{i=1}^{m} 	 \left\langle \nabla f_{n+1}^{(i)}(\theta_{n}^{(i)}) ,\, \theta_{n}^{(i)} - \theta_{n+1}^{(i\star)}  \right\rangle   \\ 
   =& 	 \left\langle \left[\nabla f_{n+1}^{(1)}(\theta_{n}^{(1)}),\dots, \nabla f_{n+1}^{(m)}(\theta_{n}^{(m)})\right] ,\, \param_{n} - \param_{n+1}^{\star}  \right\rangle   \\
			\ge& 	\left\langle \nabla f(\param_{n}) ,\, \param_{n} - \param_{n+1}^{\star}  \right\rangle   - m L \lVert \param_{n}-\param_{n+1} \rVert \, \lVert \param_{n}-\param_{n+1}^{\star} \rVert \\
			\ge& 	 \left\langle \nabla f(\param_{n}) ,\, \param_{n} - \param_{n+1}^{\star}  \right\rangle   -m L r_{n+1}^{2}. 
		\end{align}
		Therefore, combining the above inequalities, we conclude 
    \begin{align}\label{eq:f_n_lipschitz_transfer}
			&\sum_{n=0}^{\infty}	\left( \widetilde{V}(\param_{n}, \param_{n+1}^{\star})=\left\langle \nabla f(\param_{n}) ,\, \param_{n} - \param_{n+1}^{\star}  \right\rangle + \sum_{i=1}^m \left(p_{n+1}^{(i)}(\theta_{n}^{(i)}) - p_{n+1}^{(i)}(\theta_{n+1}^{(i\star)}) \right)\right)\\ &\le \left(\frac{m L_{g}}{2} + m L \right) \sum_{n=1}^{\infty} r_{n}^{2} + (F(\param_{0})- F^{*}) + m \sum_{n=1}^{\infty} \Delta_{n} < \infty.
		\end{align} 
	\end{proof}

	\begin{lemma}[Key lemma for asymptotic stationarity for inexact trust-region]\label{lem:first_order_optimality2} 
		Assume \ref{assumption:A1}, \ref{assumption:A2}, \ref{assumption:A3}\textbf{(b)} and \ref{assumption:A4}.  Then there exists $c>0$ such that for all $n\ge 0$, 

\begin{align}\label{eq:optimality1_proximal0}
 \min\{ r_{n}, 1\}\sup_{\param\in \Param,\, \lVert \param-\param_{n} \rVert\le 1} \,  \widetilde{V}(\param_{n}, \param)  \le  \widetilde{V}(\param_{n}, \param_{n+1}^{\star}) + c r_{n+1}^{2}.
		\end{align}
  
	\end{lemma}

	\begin{proof}
       	Let $b_{n}:= \min\{ r_{n}, 1\}$ for all $n\ge 1$. Fix $n\ge 0$ and let $\param=[\theta^{(1)},\dots,\theta^{(m)}]\in \Param$ be such that $\lVert \param - \param_{n} \rVert \le b_{n+1}\le r_{n+1}$. 
		Then we have 
		\begin{align}
		\commHL{	G_{n+1}^{(i)}( \theta^{(i\star)}_{n+1} )    \le G_{n+1}^{(i)}( \theta^{(i)} ). }
		\end{align}
		 Let $\Param_{0}=S^{(1)}\times \dots \times S^{(m)}$ be as in Proposition \ref{prop:boundedness}. From \ref{assumption:A3}, each surrogate $g_{n}^{(i)}$ has $L_g$-Lipschitz continuous gradient on $S^{(i)}$. Also, $\param,\param_{n+1}^{\star}, \param_{n}\in \Param_{0}$ for all $n\ge 1$. Recall that  $\nabla g_{n+1}^{(i)}(\theta_{n}^{(i)})=\nabla f_{n+1}^{(i)}(\theta_{n}^{(i)})$ since $g_{n}^{(i)}\ge f_{n}^{(i)}$ and $ g_{n+1}^{(i)}(\theta_{n}^{(i)})= f_{n+1}^{(i)}(\theta_{n}^{(i)})$. Hence by subtracting $g_{n+1}^{(i)}(\theta_{n}^{(i)})$ from both sides and applying the $L_g$-smoothness of $g_{n}^{(i)}$ on $S^{(i)}$ (see Lemma \ref{lem:surrogate_L_gradient}), 
		\begin{align}
			&\hspace{-1cm}\left\langle \nabla f_{n+1}^{(i)}(\theta_{n}^{(i)}),\,  \theta_{n+1}^{(i\star)}-\theta_{n}^{(i)}  \right\rangle \commHL{+ p_{n+1}^{(i)}( \theta^{(i\star)}_{n+1} )} \\
            \le& \left\langle \nabla f_{n+1}^{(i)}(\theta_{n}^{(i)}),\,  \theta^{(i)}-\theta_{n}^{(i)}\right\rangle   + \frac{L_g}{2}\lVert \theta_{n+1}^{(i\star)} - \theta_{n}^{(i)} \rVert^{2} + \frac{L_g}{2}\lVert \theta^{(i)} - \theta_{n}^{(i)} \rVert^{2} \commHL{+ p_{n+1}^{(i)}( \theta^{(i)})} \\
			\le& \left\langle \nabla f_{n+1}^{(i)}(\theta_{n}^{(i)}),\,  \theta^{(i)}-\theta_{n}^{(i)}\right\rangle   + L_g r_{n+1}^{2}\commHL{+p_{n+1}^{(i)}( \theta^{(i)})}.
		\end{align}
		Adding up these inequalities for $i=1,\dots,m$,  
		\begin{align}
			&\hspace{-1cm}\sum_{i=1}^{m} 	\left\langle \nabla f_{n+1}^{(i)}(\theta_{n}^{(i)}),\,  \theta_{n+1}^{(i\star)}-\theta_{n}^{(i)}  \right\rangle \commHL{+ p_{n+1}^{(i)}(\theta_{n+1}^{(i\star)}) - p_{n+1}^{(i)}(\theta^{(i)})} \\
   \le&   \left\langle \left[ \nabla f_{n+1}^{(1)}(\theta_{n}^{(1)}),\dots,\nabla f_{n+1}^{(m)}(\theta_{n}^{(m)}) \right],\,  \param - \param_{n} \right\rangle + m L_g r_{n+1}^{2} \\
			\overset{(a)}{\le}& \left\langle \nabla f(\param_{n}),\, \param-\param_{n} \right\rangle  + m L \lVert \param_{n+1}-\param_{n} \rVert\cdot  \lVert \param -\param_{n} \rVert + m L_g r_{n+1}^{2} \\
			\overset{(b)}{\le}&  \left\langle \nabla f(\param_{n}),\, \param-\param_{n} \right\rangle  + c r_{n+1}^{2}
		\end{align}
		for $c:=m^{2} L+mL_g$, where (a) uses that  $\nabla_{i} f$ is $L$-Lipschitz in the $i$th block coordinate (Proposition \ref{prop:boundedness}) for each $i=1,\dots,m$ and (b) follows since $\lVert \param_{n+1}-\param_{n} \rVert \le m r_{n+1}$ and $\lVert \param-\param_{n} \rVert \le r_{n+1}$.  The above inequality holds for all $\param\in \Param$ with $\lVert \param - \param_{n} \rVert\le b_{n+1}$. Furthermore, note that 
		\begin{align}
			&\hspace{-1cm}\sum_{i=1}^{m} 	\left\langle \nabla f_{n+1}^{(i)}(\theta_{n}^{(i)}),\,  \theta_{n+1}^{(i\star)}-\theta_{n}^{(i)}  \right\rangle \\
   =& \left\langle \nabla f_{n+1}^{(1)}(\theta_{n}^{(1)}),\dots, \nabla f_{n+1}^{(m)}(\theta_{n}^{(1)})  ,\, \param_{n+1}^{\star} -\param_{n} \right\rangle \\
			\ge& \left\langle \nabla f (\param_{n}) ,\, \param_{n+1}^{\star} -\param_{n} \right\rangle  - m L \lVert \param_{n}-\param_{n+1} \rVert\, \lVert \param_{n+1}^{\star}-\param_{n} \rVert \\
			\ge& \left\langle \nabla f (\param_{n}) ,\, \param_{n+1}^{\star} -\param_{n} \right\rangle  - m L r_{n+1}^{2}.
		\end{align}		
		Thus it follows that 
		\begin{align}
  \sup_{\param\in \Param, \lVert \param-\param_{n} \rVert\le b_{n+1}} \widetilde{V}(\param_{n}, \param ) \le \widetilde{V}(\param_{n}, \param_{n+1}^{\star}) + (c+m L) r_{n+1}^{2}. 
		\end{align}
		Then using the same argument as in Lemma \ref{lem:first_order_optimality}, we can conclude the assertion. 
	\end{proof}

	\begin{prop}[Sufficient condition for stationarity II]\label{prop:stationary_conditions}
		Assume \ref{assumption:A1}, \ref{assumption:A2}, \ref{assumption:A3}\textbf{(b)} and \ref{assumption:A4}. If there exists a subsequence $(\param_{n_{k}})_{k\ge 1}$ that if $\sum_{k=1}^{\infty}  \lVert \param_{n_{k}}- \param_{n_{k}+1}^{\star} \rVert = \infty$, 
		then there is a further subsequence $(s_{k})_{k\ge 1}$ of $(n_{k})_{k\ge 1}$ so that  $\lim_{k\rightarrow \infty} \param_{s_{k}}$ exists and is a stationary-Nash point of $F$ over $\Param$.
	\end{prop}
	
	\begin{proof}
		Denote $\eta_{n}:=\param_{n}-\param^{\star}_{n+1}$. Then under the hypothesis, by Prop. \ref{prop:finite_range_short_points2} 
	\begin{align}\label{eq:stationary_conditions}
\commHL{ \liminf_{k\rightarrow\infty}\,\,  \sum_{k=1}^{\infty}  \frac{1}{\lVert \eta_{n_{k}} \rVert}\widetilde{V}(\param_{n_{k}}, \param_{n_{k}+1}^{\star}) = 0.}
	\end{align}
	Let $(t_{k})_{k\ge 1}$ be a subsequence of $(n_{k})_{k\ge 1}$ for which the  liminf in \eqref{eq:stationary_conditions} is achieved. According to Prop. \ref{prop:boundedness}, by taking a further subsequence, we may assume that $\param_{\infty}=\lim_{k\rightarrow \infty} \param_{t_{k}}$ exists. Then $\param_{\infty}\in \Param$ since $\Param$ is closed by \ref{assumption:A1}.

		Now suppose for contradiction that $\param_{\infty}$ is not a stationary-Nash point of $F$ over $\Param$.
  Then there exists ${\param}'\in\Param$ and $\delta>0$ such that $\widetilde{V}(\param_{\infty}, \param') >\delta>0$. Denote $\param^{t}:=t \param' + (1-t)\param_{\infty}$ for $t\in [0,1]$. Then by convexity of $p$, 
		\begin{align}
			\widetilde{V}(\param_{\infty}, \param^{t})
            & \ge t	\widetilde{V}(\param_{\infty}, \param')> t\, \delta >0 \quad \textup{for all $t\in [0,1]$}. 
		\end{align}
		Choose $t$ sufficiently small so that $\lVert \param^{t} -\param_{\infty} \rVert<1/2$ and denote $\param^{*}$ for such $\param^{t}$. Note that $\param^{*}\in \Param$ by convexity of $\Param$.

		\commHL{By \ref{assumption:A1}, the function $\widetilde{V}(\cdot,\cdot)$ in \eqref{eq:stationary3} is continuous. Hence we have 
 $\widetilde{V}(\param_{t_{k}}, \param^{*}) \rightarrow \widetilde{V}(\param_{\infty}, \param^{*})>t\delta>0$ as $k\rightarrow\infty$. }
		Hence by Lemma \ref{lem:first_order_optimality2},  there is a constant $c>0$ such that for all $n\ge 1$ and $k$ sufficiently large, 
\begin{equation}
		\frac{t\delta}{2} \le \widetilde{V}(\param_{t_{k}},\, \param^{*}) \le	\sup_{\substack{\param\in \Param \\ \lVert \param-\param_{t_{k}} \rVert\le 1}} \widetilde{V}(\param_{t_{k}}, \param) \le \frac{\lVert \eta_{t_{k}} \rVert}{\min \{ r_{t_{k}} ,1 \}}	  \frac{\widetilde{V}(\param_{t_{k}}, \param_{t_{k}+1}^{\star})}{\lVert \eta_{t_{k}} \rVert}   +  \frac{c r_{t_{k}+1}^{2}}{\min \{ r_{t_{k}} ,1 \}}.
\end{equation}
		Since $\lVert \param_{n+1}^{\star} - \param_{n} \rVert\le mr_{n+1}$, by using the hypothesis, the right-hand side above converges to zero as $k\rightarrow\infty$, which is a contradiction. 
	\end{proof}

	The following result is crucial in establishing global convergence to stationary-Nash points (Theorem \ref{thm:complexity} \textbf{(i)}) and it is the only place where we use non-summability of the radii $r_{n}$'s in the proof of Theorem \ref{thm:complexity}.

	\begin{prop}[Local structure of a non-stationary-Nash limit point]\label{prop:non-stationary_nbh}
		Assume \ref{assumption:A1}, \ref{assumption:A2}, \ref{assumption:A3}\textbf{(b)} and \ref{assumption:A4}. Suppose there exists a non-stationary-Nash limit point $\param_{\infty}$ of $\Lambda$. Then there exists $\eps>0$ such that the $\eps$-neighborhood $B_{\eps}(\param_{\infty}):=\{ \param\in \Param\,|\, \lVert \param-\param_{\infty} \rVert<\eps\}$ with the following properties: 
    \vspace{0.1cm}
		\begin{description}
			\item[(a)] $B_{\eps}(\param_{\infty})$ does not contain any stationary-Nash points of $\Lambda$. 
			\vspace{0.1cm}
			\item[(b)] There exists infinitely many $\param_{n}^{\star}$'s outside of $B_{\eps}(\param_{\infty})$.
		\end{description}
	\end{prop}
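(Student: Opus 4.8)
The plan is to extract from the non-stationarity of $\param_\infty$ a feasible descent direction that survives on a small neighborhood, use it to obtain (a) immediately, and then prove (b) by contradiction: if the iterates were eventually trapped in such a neighborhood, each block cycle would decrease $f$ by an amount of order $w_{n+1}$, and summing against $\sum_n w_n=\infty$ would drive $f$ to $-\infty$, contradicting $f\ge 0$.

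First I would fix the descent direction. Since $\param_\infty$ is non-stationary, there are $\param^\star\in\Param$ and $\delta>0$ with $\tr(\nabla f(\param_\infty)^T(\param^\star-\param_\infty))<-\delta$. Because $\nabla f$ is continuous and $\param\mapsto\param^\star-\param$ is continuous, there is $\eps>0$ so that $\tr(\nabla f(\param)^T(\param^\star-\param))<-\delta/2$ for every $\param\in B_\eps(\param_\infty)$. For such $\param$ the vector $\param^\star-\param$ is a feasible descent direction (as $\param^\star\in\Param$ and $\Param$ is convex), so $\param$ violates the first-order stationarity inequality $\tr(\nabla f(\param)^T(\param'-\param))\ge 0$ for all $\param'\in\Param$. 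Hence no point of $B_\eps(\param_\infty)$ is stationary, which is exactly (a).

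For (b) I would argue by contradiction, assuming only finitely many $\param_n$ lie outside $B_\eps(\param_\infty)$, so that $\param_n\in B_\eps(\param_\infty)$ for all $n\ge N_0$. Decomposing the uniform descent into blocks, for each such $n$ the inequality $\sum_{i=1}^m \nabla_i f(\param_n)^T(\theta^{\star,(i)}-\theta_n^{(i)})<-\delta/2$ forces at least one block $i=i(n)$ with $\nabla_i f(\param_n)^T(\theta^{\star,(i)}-\theta_n^{(i)})<-\delta/(2m)$; in particular $\theta^{\star,(i)}-\theta_n^{(i)}$ is bounded away from zero there. I would then probe the corresponding block subproblem \eqref{eq:BCD_DR_block} with the competitor obtained by moving $\theta_n^{(i)}$ a Frobenius distance $c'w_{n+1}$ toward $\theta^{\star,(i)}$ (feasible in $\Theta^{(i)}$ by convexity and admissible for the radius constraint once $n$ is large). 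Applying Lemma \ref{lem:surrogate_L_gradient} to this single-block displacement, and using Lipschitz continuity of $\nabla f$ to replace the gradient at $\param_n$ by the gradient actually seen in the Gauss--Seidel subproblem (the already-updated blocks differ from their old values by $O(w_{n+1})$), the linear term contributes $\le -\kappa w_{n+1}$ while the quadratic error is $O(w_{n+1}^2)$. Since $w_n\to 0$ (from $\sum_n w_n^2<\infty$), for large $n$ the linear term dominates, so the block minimizer $\theta_{n+1}^{(i)}$ achieves an $f$-decrease of at least $\tfrac{\kappa}{2}w_{n+1}$ in that block; as every block update is non-increasing (cf. the proof of Proposition \ref{prop:forward_monotonicity}), this yields $f(\param_n)-f(\param_{n+1})\ge \tfrac{\kappa}{2}w_{n+1}$.

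Finally I would sum this estimate. Telescoping together with $f\ge 0$ gives $\sum_{n\ge N_0}\bigl(f(\param_n)-f(\param_{n+1})\bigr)\le f(\param_{N_0})<\infty$, whereas the per-step bound gives $\sum_{n\ge N_0}\bigl(f(\param_n)-f(\param_{n+1})\bigr)\ge \tfrac{\kappa}{2}\sum_{n\ge N_0} w_{n+1}=\infty$ by $\sum_n w_n=\infty$, a contradiction; hence infinitely many $\param_n$ lie outside $B_\eps(\param_\infty)$, establishing (b). I expect the main obstacle to be the block-wise descent estimate in the third paragraph: one must carefully account for the sequential Gauss--Seidel updating (so that the gradient driving block $i$ is not literally $\nabla_i f(\param_n)$) and for the diminishing radius constraint, while verifying that the linear-in-$w_{n+1}$ gain genuinely dominates the quadratic error uniformly over all large $n$. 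The reasoning parallels Proposition \ref{prop:first_order_optimality}, which may be invoked to streamline the passage from joint to block descent.
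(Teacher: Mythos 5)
Your proof is correct, but it takes a genuinely different route from the paper's, most notably for part \textbf{(b)}. For \textbf{(a)}, the paper argues by contradiction, extracting a sequence of stationary points converging to $\param_{\infty}$ and showing $\param_{\infty}$ would then be stationary; your direct argument --- fixing $\param^{\star}$ with $\tr(\nabla f(\param_{\infty})^{T}(\param^{\star}-\param_{\infty}))<-\delta$ and using continuity of $\nabla f$ on the compact $\Param$ to propagate the strict descent inequality to all of $B_{\eps}(\param_{\infty})$ --- reaches the same conclusion more cleanly. For \textbf{(b)}, the paper first shows the neighborhood contains no \emph{long points} (via Proposition \ref{prop:long_points_stationary}), deduces that trapped iterates are all short points with $\lVert\param_{n}-\param_{n+1}\rVert_{F}\ge c'w_{n+1}$, and then combines Proposition \ref{prop:finite_range_short_points} with $\sum_n w_n=\infty$ and Proposition \ref{prop:stationary_conditions} to manufacture a stationary limit inside $B_{\eps}$, contradicting \textbf{(a)}. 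You instead derive a quantitative per-iteration decrease $f(\param_{n})-f(\param_{n+1})\ge \tfrac{\kappa}{2}w_{n+1}$ by probing one well-chosen block subproblem with the feasible competitor $\theta_{n}^{(i)}+\tfrac{c'w_{n+1}}{\lVert\theta^{\star,(i)}-\theta_{n}^{(i)}\rVert_{F}}(\theta^{\star,(i)}-\theta_{n}^{(i)})$, and contradict $f\ge 0$ via $\sum_n w_n=\infty$. The details you flag as delicate all check out: the block $i(n)$ with $\nabla_i f(\param_{n})^{T}(\theta^{\star,(i)}-\theta_{n}^{(i)})<-\delta/(2m)$ has $\lVert\theta^{\star,(i)}-\theta_{n}^{(i)}\rVert_{F}$ bounded below by Cauchy--Schwarz (so the competitor lies on the segment for large $n$), the Gauss--Seidel gradient mismatch and the Lemma \ref{lem:surrogate_L_gradient} remainder are both $O(w_{n+1}^{2})$, and $w_n\to 0$ lets the linear term dominate uniformly. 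What each approach buys: yours is more elementary and self-contained, bypassing the long/short point dichotomy and Propositions \ref{prop:finite_range_short_points}, \ref{prop:stationary_conditions}, and \ref{prop:long_points_stationary} entirely, and it yields an explicit descent rate near a non-stationary limit point; the paper's version reuses machinery it needs anyway for Theorem \ref{thm:nonconvex_function} and so adds little marginal length in context.
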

	
	\begin{proof}
        We first show that there exists $\eps>0$ such that $B_{\eps}(\param_{\infty})$ satisfies  \textbf{(a)}. Suppose for contradiction that there exists no such $\eps>0$. Then we have a sequence $(\param_{\infty;k})_{k\ge 1}$ of stationary-Nash points of $\Lambda$ that converges to $\param_{\infty}$. Fix $\param\in \Param$. \commHL{Note that by continuity of $\tilde{V}(\cdot,\cdot)$ and $\widetilde{V}(\param_{\infty;k},\param)\le 0$ since $\param_{\infty;k}$ is a stationary-Nash point of $F_{\infty;k}$ over $\Param$, we have $\widetilde{V}(\param_\infty,\param)\le 0$.} Since $\param\in \Param$ was arbitrary, this implies that $\param_{\infty}$ is a stationary-Nash point of $F$ over $\Param$, a contradiction.

        Next, we show that we can assume by choosing $\eps>0$ smaller, if necessary, that $B_{\eps}(\param_{\infty})$ that does not contain any short points of $\Lambda$.   Suppose not. Then for all $\eps>0$ sufficiently small, $B_{\eps}(\param_{\infty})$ contains some short point of $\Lambda$.  Then there exists a subsequence of short points of $\Lambda$ converging to $\param_{\infty}$, but then $\param_{\infty}$ has to be stationary-Nash by Proposition \ref{prop:long_points_stationary}, which contradicts the hypothesis.

		Now we assume that  $B_{\eps}(\param_{\infty})$ has no short point of $\Lambda$ and also satisfies  \textbf{(a)}.  We will show that $B_{\eps/2}(\param_{\infty})$ satisfies \textbf{(b)}. Then $B_{\eps/2}(\param_{\infty})$ satisfies  \textbf{(a)}-\textbf{(b)}, as desired. Suppose for contradiction there are only finitely many $\param_{n}^{\star}$'s outside of $B_{\eps/2}(\param_{\infty})$. Then there exists an integer $M\ge 1$ such that $\param_{n}^{\star}\in B_{\eps/2}(\param_{\infty})$ for all $n\ge M$. Then each $\param_{n}^{\star}$ for $n\ge M$ is a long point of $\Lambda$. By definition, it follows that $\lVert \param_{n}^{\star}-\param_{n-1}\lVert \ge r_{n}$ for all $n\ge M$. Then since $\sum_{n=1}^{\infty} r_{n}=\infty$, by Proposition \ref{prop:stationary_conditions} there exists a subsequence $(n_{k})_{k\ge 1}$ such that $\param_{\infty}':=\lim_{k\rightarrow \infty} \param_{n_{k}} $ exists and is stationary-Nash.  But since $\param'_{\infty}\in B_{\eps}(\param)$, this  contradicts \textbf{(a)}. This shows the assertion. 
	\end{proof}

	We are now ready to give proof of the asymptotic stationarity result stated in  Theorem \ref{thm:complexity} \textbf{(iii)}. 
	
	\vspace{0.1cm}
	\begin{proof}[\text{Proof of Theorem \ref{thm:complexity} \textup{(iii)} under \ref{assumption:A3}\textbf{\textup{(a)}}}]
		
		Suppose for contradiction that there exists a non-stationary-Nash limit point $\param_{\infty}\in \Param$ of $\Lambda$. We may choose $\eps>0$ such that $B_{\eps}(\param_{\infty})$ satisfies the conditions \textbf{(a)}-\textbf{(b)} of Proposition \ref{prop:non-stationary_nbh}. Choose $M\ge 1$ large enough so that $m r_{n}<\eps/3$ whenever $n\ge M$. We call an integer interval $I:=[\ell,\ell']$ a \textit{crossing} if $\param_{\ell}\in B_{\eps/3}(\param_{\infty})$, $\param_{\ell'}\in B_{2\eps/3}(\param_{\infty})$, and no proper subset of $I$ satisfies both of these conditions. By definition, two distinct crossings have empty intersections. Fix a crossing $I=[\ell,\ell']$, it follows that by triangle inequality,
		\begin{align}\label{eq:upcrossing_ineq}
			\sum_{n=\ell}^{\ell'-1} \lVert \param_{n+1}-\param_{n} \rVert \ge \lVert \param_{\ell'}-\param_{\ell} \rVert \ge \eps/3. 
		\end{align}
		Note that since $\param_{\infty}$ is a limit point of $\Lambda$, $\param_{n}$ visits $B_{\eps/3}(\param_{\infty})$ infinitely often. Moreover, by condition \textbf{(b)} of Proposition \ref{prop:non-stationary_nbh}, $\param_{n}^{\star}$ also exits $B_{\eps}(\param_{\infty})$ infinitely often. But since $\lVert \param_{n}-\param_{n}^{\star} \rVert \le mr_{n}<\eps/3$, it follows that $\param_{n}$ exits $B_{2\eps/3}(\param_{\infty})$ infinitely often. It follows that there are infinitely many crossings.  Let $t_{k}$ denote the $k^{\textup{th}}$ smallest index that appears in some crossing. Then $t_{k}\rightarrow \infty$ as $k\rightarrow \infty$, and by \eqref{eq:upcrossing_ineq},
		\begin{align}
			\sum_{k=1}^{\infty} \lVert \param_{t_{k}+1} - \param_{t_{k}} \rVert \ge (\text{$\#$ of crossings}) \, \frac{\eps}{3} = \infty.
		\end{align}
       By triangle inequality, 
        \begin{align}
             \lVert \param_{t_{k}+1}^{\star} - \param_{t_{k}} \rVert & \ge  \lVert \param_{t_{k}+1} - \param_{t_{k}} \rVert  -  \lVert \param_{t_{k}+1}^{\star} - \param_{t_{k}+1} \rVert \ge   \lVert \param_{t_{k}+1} - \param_{t_{k}} \rVert  - \Delta_{t_{k}}.
        \end{align}
        Since $\Delta_{n}$'s are summable by Assumption \ref{assumption:A4}, it follows that $\sum_{k=1}^{\infty} \lVert \param_{t_{k}+1}^{\star} - \param_{t_{k}} \rVert = \infty$. 
        
        Then by Proposition \ref{prop:stationary_conditions}, there exists a  subsequence $(s_{k})_{k\ge 1}$ of $(t_{k})_{k\ge 1}$ such that $\param_{\infty}':=\lim_{k\rightarrow \infty} \param_{s_{k}}$ exists and is a \commHL{stationary-Nash} point of $F$ over $\Param$. However, since $\param_{t_{k}}\in B_{2\eps/3}(\param_{\infty})$ for all $k\ge 1$ by the choice of $t_{k}$, we have $\param_{\infty}'\in B_{\eps}(\param_{\infty})$. This contradicts the condition \textbf{(a)} of Proposition \ref{prop:non-stationary_nbh} since $B_{\eps}(\param_{\infty})$  cannot contain any stationary-Nash point in $\Lambda$. 
	\end{proof}

	\vspace{0.2cm}

	\section{Applications of the main result}  
 \label{sec:app}

\subsection{Nonnegative Matrix Factorization (NMF) }
\label{sec:NMF}

Given a $p\times N$ data matrix $X\in \R^{p\times N}$ and an integer parameter $r\ge 1$, consider the following  \textit{constrained matrix factorization} problem
	\begin{align}\label{eq:NMF}
		\argmin_{W\in \Theta^{(1)}\subseteq \R^{p\times r}, \, H\in \Theta^{(2)}\subseteq \R^{r\times N}} \lVert X - WH \lVert_{F}^{2} + \lambda_1 \lVert H \rVert_{1}, 
	\end{align}
	where the two factors $W$ and $H$ are called \textit{dictionary} and \textit{code} matrices of $X$, respectively, and $\lambda_1 \ge 0$ is a $\ell_{1}$-regularizer for the code matrix $H$. Depending on the application contexts, we may impose some \textit{constraints} $\Theta^{(1)}$ and $\Theta^{(2)}$ on the dictionary and code matrices, respectively, such as nonnegativity or some other convex constraints. An interpretation of this approximate factorization is that the $r$ columns of $W$ give an approximate basis for spanning the $N$ columns of $X$, where the columns of $H$ give suitable linear coefficients for each approximation \cite{olshausen1997sparse}. 

The Nonnegative Matrix Factorization (NMF) \cite{lee2001algorithms} is a special instance of the constrain matrix factorization problem above where $\Theta^{(1)}=\R_{\ge 0}^{p\times r}$ and $\Theta^{(2)}=\R_{\ge 0}^{r\times N}$ and $\lambda_1 =0$ and $X\in \R_{\ge 0}^{p\times N}$ is given. 
NMF has numerous applications in text analysis, image reconstruction, medical imaging, bioinformatics, and many other scientific fields  \citep{  berry2007algorithms, taslaman2012framework}. 
The use of nonnegativity constraint in NMF is crucial in obtaining a ``parts-based" representation of the input signal \cite{lee2001algorithms}.

In \cite{lee2001algorithms}, the following multiplicative update (MU) is studied for NMF:
\begin{align}\label{eq:MU_NMF}
\hspace{-2.3cm}\textbf{MU}\hspace{1cm}
\begin{cases}
H_{n+1} \leftarrow H_{n} \odot (W_{n}^{T}  X) \oslash (W_{n}^{T} W_{n}  H_{n} ) \\
W_{n+1}^{T} \leftarrow W_{n}^T \odot (H_{n}  X^{T}) \oslash (H_{n} H_{n}^{T}  W_{n}^{T} ),
\end{cases}
\end{align}
where $\odot$ and $\oslash$ denote entry-wise multiplication and division. Given a nonnegative initialization $(W_{0},H_{0})$, the iterate \eqref{eq:MU_NMF} generates a sequence of non-negative factor matrices $(W_{n}, H_{n})_{\ge 0}$. In \cite{lee2001algorithms}, it was shown that the objective value of the NMF monotonically decreases under the iterate \eqref{eq:MU_NMF}, but it has not been proven that the convergence is toward the set of stationary points of the NMF problem. There are some works that propose modified versions of \eqref{eq:MU_NMF} and show asymptotic convergence to stationary points (e.g., \cite{lin2007convergence}). Furthermore, to the best of our knowledge, there has not been any result on the rate of convergence of any variants of MU \eqref{eq:MU_NMF}. 

Here we propose \textit{MU with Regularization} (MUR), which falls under our BMM (Alg. \eqref{eq:BMM_DR_highlevel}) and satisfies the hypothesis of our main result, Theorem \ref{thm:complexity}. Fix regularization parameters $\delta\ge 0$ (thresholding parameter), $\rho\ge 0$ (proximal regularization parameter). Consider the following variant of MU: 
\begin{align}\label{eq:MUPR_NMF}
\textbf{MUR}\hspace{0.5cm}
\begin{cases}
\tilde{H}_{n}\leftarrow H_{n}\lor \delta, \\ 
H_{n+1} \leftarrow (\tilde{H}_{n} \odot (W_{n}^{T}  X + \rho \tilde{H}_{n}) \oslash ((W_{n}^{T} W_{n} + \rho \mathbf{I})  \tilde{H}_{n} ), \\
\tilde{W}_{n}\leftarrow \tilde{W}_{n}\lor \delta, \\ 
W_{n+1}^T \leftarrow W_{n}^T \odot (H_{n}  X^{T} + \rho \tilde{W}_{n}^T) \oslash ((H_{n} H_{n}^{T}+  \rho \mathbf{I})  \tilde{W}_{n}^T ),
\end{cases}
\end{align}
where $(\, \cdot \lor \delta)$ is the operation of taking maximum with $\delta$ entry-wise and  $\mathbf{I}$ denotes the $r\times r$ identity matrix. Note that by setting $\delta=\rho=0$, \eqref{eq:MUPR_NMF} reduces to the standard MU in \eqref{eq:MU_NMF}. The following corollary shows that the MUR \eqref{eq:MUPR_NMF} algorithm for NMF, as long as  $\rho,\delta>0$, retains the asymptotic convergence and the rate of convergence stated in Theorem \ref{thm:complexity}.

\begin{corollary}[Convergence of MUR for NMF] \label{cor:NMF}
		Fix a matrix $X\in \R_{\ge 0}^{p \times N}$. Let $(\param_{n})_{n\ge 0}$, $\param_{n}:=(W_{n}, H_{n})$ be generated by \eqref{eq:MUPR_NMF} with arbitrary initialization $\param_{0}$ in a compact set $\Param\subseteq\R_{\ge 0}^{p\times r}\times \R_{\ge 0}^{r\times N}$. Denote $f(W,H):=\frac{1}{2}\lVert X- WH \rVert_{F}^{2}$. Suppose the thresholding and proximal regularization parameters $\delta,\rho$ are strictly positive. Then Theorem \ref{thm:complexity}\textbf{(i)}-\textbf{(iii)}  holds for $(\param_{n})_{n\ge 0}$. 
\end{corollary}

\commHL{While we restrict the factors $(W,H)$ for NMF to live inside a compact subset $\Param$ Corollary \ref{cor:NMF} for a technical reason, this does not lose any generality in terms of the objective values if $\Param$ consists of all nonnegative factor matrices with Frobenius norm bounded by any constant at least $2r\lVert X\rVert_{F}^{1/2}$. Indeed, if $(W, H)$ is any pair of nonnegative factors such that $\lVert X-WH\rVert_{F}\le \lVert X\rVert_{F}$.
then one can rescale the columns of $W$ and the corresponding rows of $H$ suitably so that their norm is at most $2\lVert X\rVert_{F}^{1/2}$ and the rescaled factors  have the same objective value $\lVert X-WH \rVert_{F}^{2}$ (see \cite[Sec. 6]{lyu2022online}).}

To justify Corollary \ref{cor:NMF}, we first explain why MUR \eqref{eq:MUPR_NMF} can be viewed as a special instance of the BMM algorithm (Alg. \eqref{eq:BMM_DR_highlevel}).  Consider the following convex sub-problem of \eqref{eq:NMF}: 
$
\min_{H\ge 0} \left( f_{n}(H):=\frac{1}{2}\lVert X- W_{n}H \rVert_{F}^{2} \right).
$
For any matrix $A$ and positive integer $j$ at most the number of columns in $A$, denote $A^{j}$ to be its $j$th column. Then $f_{n}(H)=\sum_{j=1}^{N} f_{n}^{j}(H^{j})$, where  $f_{n}^{j}(\mathbf{h}):= \frac{1}{2}\lVert X^{j} - W_{n} \mathbf{h} \rVert_{F}^{2}$. 

Fix parameters $\rho,\delta\ge 0$. Define a function $g_{n}:\R^{r\times N}\rightarrow \R$ by $g_{n}(H):=\sum_{j=1}^{N} g_{n}^{j}(H^{j})$, where
\begin{align}\label{eq:surrogate_MU}
 g_n^{j}(\h)&:= 
f^j_{n}\left( \h_{n}\right)+\left(\h-\h_n\right)^T \nabla_{\h} f^j_{n}\left( \h_{n}\right)
+\frac{1}{2}\left(\h-\h_n\right)^T \left( \mathbf{D} +\rho \mathbf{I} \right) \left(\h-\h_n\right), 
\end{align}
where $\h_{n}:=H_{n}^{j}$ and $\mathbf{D} =\mathbf{D}^{j}_{n}(\delta,\rho)$ is the $r\times r$ diagonal matrix given by 
\begin{align}\label{eq:D_def}
\mathbf{D}_{a,b} := \mathbf{1}(a=b) \frac{[W_{n}^{T} W_{n} (\h_{n} \lor \delta ) ]_{a} }{ [\h_{n} \lor \delta]_{a}  }.
\end{align}
\commHL{Note that taking $\delta=0$ in \eqref{eq:D_def}, $g_{n}^{j}$ defined above becomes the majorizer of $f_{n}^{j}$ used in \cite{lee2001algorithms}.}
The thresholding parameter $\delta\ge 0$ prevents the denominator above from vanishing when $H^{j}_{n}$ has zero on some coordinates. 
\commHL{Since the thresholding is applied to $\h_{n}$ in both the numerator and the denominator in \eqref{eq:D_def} and since we do not threshold $W_{n}^{T}W_{n}$, it turns out that $g_{n}^{j}$ is still a majorizer of $f_{n}^{j}$ for $\delta>0$. We claim that the following properties hold:}
\begin{enumerate}[itemsep=0.1cm, label={[\texttt{\alph*}]}]
    
    \item  $\nabla f$ is $L$-Lipschitz continuous for some $L>0$ when restricted onto a compact set.
    \label{NMF_claim_0}

    \item  $g_{n}\ge f_{n}$ and $g_{n}(H_{n})=f_{n}(H_{n})$; $g_{n}$ is a quadratic function. 
    \label{NMF_claim_1}
    
    \item $H_{n+1}$ in \eqref{eq:MUPR_NMF} is an exact minimizer of $g_{n}$ over $\R_{\ge 0}^{p\times r}$. 
     \label{NMF_claim_2}
    
    \item  $g_n$ is $\rho$-strongly convex.
     \label{NMF_claim_3}

     \item If $\delta>0$, then there exists a constant $L_{h}>0$ such that $h_{n}:=g_{n}-f_{n}$ has $L_{h}$-Lipschitz continuous gradient for some constant $L_{h}>0$. 
      \label{NMF_claim_4}

\end{enumerate}
A similar construction of surrogate function and claim will hold for $W_{n+1}$ by symmetry. Points \ref{NMF_claim_1}-\ref{NMF_claim_2} verify that \eqref{eq:MUPR_NMF} is a particular instance of BMM (Alg. \eqref{eq:BMM_DR_highlevel}). Given this, we can easily verify that the hypothesis of Theorem \ref{thm:complexity} holds for the NMF problem \eqref{eq:NMF} and the algorithm MUR \eqref{eq:MUPR_NMF}. Indeed, \commHL{Assumptions \ref{assumption:A1} and \ref{assumption:A2} hold trivially, except the sub-level set compactness for NMF holds if and only if $\Param$ is compact, which holds by the hypothesis.} The smoothness property of the majorizer in \ref{assumption:A3} holds by point \ref{NMF_claim_4}. \ref{assumption:A3}\textbf{(b)} holds for $\rho>0$ due to point \ref{NMF_claim_3} (This is why we should require $\rho>0$ in Corollary \ref{cor:NMF}). Lastly, \ref{assumption:A4} holds by point \ref{NMF_claim_2}. This is enough to deduce Corollary \ref{cor:NMF} from Theorem \ref{thm:complexity}. 

\begin{proof}[Proof of points \textup{\ref{NMF_claim_0}-\ref{NMF_claim_4}}]

We first justify  \ref{NMF_claim_0}.  Note
    \begin{align}
       \nabla f(W,H) - \nabla f(W',H') =  \left[ W^{T}W(H-H'), (W-W')HH^{T} \right].
    \end{align}
Thus, if we restrict $(W,H)$ on a compact subset of the parameter space, then $\nabla f$ is $L$-Lipschitz continuous for some constant (depending on the compact subset) $L>0$. This verifies \ref{NMF_claim_0}.

\commHL{Next, we show point \ref{NMF_claim_1}.} Clearly $g_{n}$ is a quadratic function.  Next, fix $j\in \{1,\dots, N\}$. Note that we can expand $f_{n}^{j}(\mathbf{h})$ at $\h:=H_{n}^{j}$ as a quadratic function. Subtracting this from $g_{n}^{j}$,
\begin{align}\label{eq:pf_surrogate_NMF_diff}
    g_{n}^{j}(\h) - f_{n}^{j}(\h) = \frac{1}{2}\left(\h-\h_n\right)^T \left( \mathbf{D}^j_{n}  - W_{n}^{T}W_{n} + \rho \mathbf{I} \right)\left(\h-\h_n\right). 
\end{align}
We claim that the matrix $\mathbf{D}^{j}_{n}  - W_{n}^{T}W_{n}$ is positive semidefinite. To justify the claim, it is enough to show that the following rescaled matrix is positive semidefinite: 
\begin{align}
    Q := \textup{diag}( \tilde{H}_{n}^{j} ) \, (\mathbf{D}_{n}^{j} - W_{n}^{T}W_{n}) \,  \textup{diag}(\tilde{H}_{n}^{j}).
\end{align}
Indeed, this can be shown from the following observation (similar computation was used in the proof of \cite[Lem. 2]{lee2001algorithms} \commHL{with $\tilde{H}_{n}^{j}$ replaced with $H_{n}^{j}$}):  For all $\x\in \R^{r}$, 
\begin{align}
    \x^{T} Q   \x 
    &= \sum_{a,b}  [W_{n}^{T} W_{n}]_{a,b}  [\tilde{H}_{n}^{j}]_{a}  [\tilde{H}_{n}^{j}]_{b}  \left( \x_{a}-\x_{b}  \right)^{2}/2 \ge 0.
\end{align}
Now, since $Q$ is positive semidefinite, the identity \eqref{eq:pf_surrogate_NMF_diff} implies points \ref{NMF_claim_1}. Point  \ref{NMF_claim_3} follows by noting that $\mathbf{D}_n^j$ is a nonnegative diagonal matrix.

Next, we justify point \ref{NMF_claim_2}. First note that minimizing $g_{n}(H)$ over $H\in \R_{\ge 0}^{r\times N}$ can be done separately over the columns of $H$. Note that 
\begin{align}\label{eq:NMF_surrogate_grad}
     \nabla  g_{n}^{j} (H^{j}) 
    &= - W_{n}^{T}X^{j}  +  (\mathbf{D}^{j}_{n}+\rho\mathbf{I}) H^{j}-\rho\tilde{H}^j_n.
\end{align}
The global minimizer of  $g_{n}^{j}$ is given by the solution  to $\nabla  g_{n}^{j}  =\mathbf{0}$, which is 
\begin{align}
    H^{j}_{n+1}:= (\mathbf{D}_{n}^{j} + \rho \mathbf{I})^{-1} \left( W_{n}^{T}X^{j}+ \rho \tilde{H}_{n}^{j} \right)
\end{align}
Assuming $H_{n}$ and $W_{n}$ are entry-wise nonnegative, recursively, $H_{n+1}^{j}$ is also entry-wise nonnegative. Hence $H_{n+1}^{j}$ above is the global minimizer of $g_{n}^{j}$ on $\R_{\ge 0}^{r}$. By collecting the columns $j=1,\dots, N$, it follows that $H_{n+1}$ in \eqref{eq:MUPR_NMF} is the global minimizer of $g_{n}$ over $\R_{\ge 0}^{r\times N}$. 

Lastly, we justify point \ref{NMF_claim_4}. From \eqref{eq:pf_surrogate_NMF_diff} it suffices to show that 
$\mathbf{D}^j_{n}  - W_{n}^{T}W_{n}$ has bounded maximum eigenvalue. 
\commHL{A key difficulty in analyzing the original MU algorithm in \cite{lee2001algorithms} is that, even if one assumes that the factors $(W,H)$ live in a compact set, the definition of $\mathbf{D}_{n}^{j}$ in \eqref{eq:D_def} involves denominator the coordinates of $\h_{n}$, which could vanish while the numerator $[W_{n}^{T}W_{n}\h_{n}]_{a}$ remain bounded. Our up-thresholding of $\h_{n}$ by $\delta>0$ exactly prevents this issue while maintaining majorization.} 
\commHL{Indeed, by the hypothesis,} the iterates $(W_{n},H_{n})_{n\ge 1}$ (and hence $\tilde{H}_{n}$s) are contained in a bounded set. Take $L$ to be 
\begin{align}
    L:=\sup_{n\ge 1} \max_{1\le j \le N} \left( \lVert \mathbf{D}_{n}^{j} \rVert_{\infty} + \rho\right) \le \sup_{n\ge 1} \,\delta^{-1} \left( \lVert W_{n}^{T}W_{n} \tilde{H}_{n} \rVert_{\infty} +\rho \right)<\infty. 
\end{align}
Then $L$ uniformly upper bounds the largest eigenvalue of $ \mathbf{D}^j_{n}  - W_{n}^{T}W_{n} + \rho \mathbf{I}$ in \eqref{eq:pf_surrogate_NMF_diff}. This shows \ref{NMF_claim_4}. 
\end{proof}

In Section \ref{sec:MUR_numerics}, we provide numerical experiments showing the advantage of MUR when dealing with sparse data.

\subsection{Applications to Constrained  Tensor Factorization}
	\label{sec:NCPD}
	
    As matrix factorization is for unimodal data, \textit{nonnegative tensor factorization} (NTF) provides a powerful and versatile tool that can extract useful latent information out of multi-model data tensors. As a result, tensor factorization methods have witnessed increasing popularity and adoption in modern data science \cite{shashua2005non}.  

    Suppose a data tensor $\X \in  \R^{I_{1}\times \cdots \times I_{m}}$ is given and fix an integer $R\ge 1$. In the  CANDECOMP/PARAFAC (CP) decomposition of $\X$ \cite{kolda2009tensor}, we would like to find \textit{loading matrices} $U^{(i)} \in \R^{I_{i}\times R}$ for $i=1,\dots, m$ such that the sum of the outer products of their respective columns approximate $\X$: $\X \approx  \sum_{k=1}^{R} \bigotimes_{i=1}^{m} U^{(i)}[:,k]$, where $U^{(i)}[:,k]$ denotes the $k^{\textup{th}}$ column of the $I_{i}\times R$ loading matrix matrix $U^{(i)}$ and $\bigotimes$ denotes the outer product. As an optimization problem, the above CP decomposition model can be formulated as the following the \textit{constrained tensor factorization} problem: 
	\begin{align}\label{eq:NTF} 
		\argmin_{U^{(i)}\in \Theta^{(i)}; i=1,\dots,m}   F(U^{(1)},\dots, U^{(m)}) :=  \left\lVert \X - \sum_{k=1}^{R} \bigotimes_{i=1}^{m} U^{(i)}[:,k] \right\rVert_{F}^{2} \hspace{-0.7em}+ \sum_{i=1}^{m}\lambda_{i} \lVert U^{(i)} \rVert_{1},
	\end{align}
	where $\Theta^{(i)}\subseteq \R^{I_{i}\times R}$ denotes a closed and convex constraint set and $\lambda_{i}\ge 0$ is a $\ell_{1}$-regularizer for the $i^{\textup{th}}$ loading matrix $U^{(i)}$ for $i=1,\dots, m$.  In particular, by taking  $\lambda_{i}=0$ and  $\Theta^{(i)}$ to be the set of nonnegative $I_{i}\times R$ matrices for $i=1,\dots, m$,  \eqref{eq:NTF} reduces to the \textit{nonnegative CP decomposition} (NCPD) \cite{shashua2005non}. 

	The constrained tensor factorization problem \eqref{eq:NTF} falls under the framework of BCD in \eqref{eq:BCD_factor_update}, since the objective function $F$ in \eqref{eq:NTF} is convex in each loading matrix $U^{(i)}$ for $i=1,\dots, m$. Indeed, BCD is a popular approach for both NMF and NTF problems \cite{kim2014algorithms}. Namely, when we apply BCD \eqref{eq:BCD_factor_update} for \eqref{eq:NTF}, each block update amounts to solving a quadratic problem under convex constraint. BCD for \eqref{eq:NTF}  is known as the form of \textit{alternating least squares} (ALS), \commHL{which can be implemented by using a few steps of projected gradient descent for solving each subproblem.} For NMF, ALS (or vanilla BCD in \eqref{eq:BCD_factor_update}) is known to converge to stationary points \cite{grippo2000convergence}. However, for NTF with $m\ge 3$ modes, global convergence to stationary points of ALS is not guaranteed in general \cite{kolda2009tensor} and requires some additional regularity conditions \cite{bertsekas1997nonlinear, grippo2000convergence}. Using our general framework of BMM-DR, we propose the following iterative algorithms for constrained tensor factorization: For each $n\ge 1$ and $i=1,\dots,m$, 
    \begin{align}\label{eq:BMM_DR_CTF}
    &\hspace{1cm}\textbf{BMM-DR for CTF}
    \begin{cases}
         \mathbf{A} \leftarrow \Out(U^{(1)}_{n-1}, \dots, U^{(i-1)}_{n-1}, U^{(i+1)}_{n-1},\dots, U^{(m-1)}_{n-1} ) \in \R^{(I_{1}\times \cdots \times   I_{i-1}\times  I_{i+1} \times \cdots\times  I_{m}) \times R } \\
          B \leftarrow \textup{unfold}(\mathbf{A}, m) \in \R^{(I_{1}\cdots I_{i-1} I_{i+1} \cdots I_{m}) \times R } \\
          g_{n}^{(i)}(U) \leftarrow \textup{Majorizing surrogate of $f_{n}^{(i)}(U):=\lVert \textup{unfold}(\mathbf{X}, i) -  B U^{T}     \rVert_{F}^{2}$} \\
      U_{n}^{(i)}\in \argmin_{U \in \Theta^{(i)},\, \lVert U-U_{n-1}^{(i)} \rVert_{F}\le r_{n}}  \, G_{n}^{(i)}(U):= g_{n}^{(i)}(U) + \lambda_{i} \lVert  U  \rVert_{1},
    \end{cases}
	\end{align}
    where $\textup{unfold}(\cdot, i)$ denotes the mode-$i$ tensor unfolding (see \cite{kolda2009tensor}) and $r_{n}\in [0,\infty]$ for $n\ge 1$ denotes radius of trust-region. 
    The iterate \eqref{eq:BMM_DR_CTF} specializes in various tensor factorization algorithms depending on the choice of the surrogate $g_{n}^{(i)}$. Namely, first, there are four ALS-type algorithms: 
    \begin{enumerate}[itemsep=0.1cm, itemindent=-0.5cm, label={(\textbf{\alph*})}]
        \item  \label{eq:ALS_CTF} (ALS) \quad $g_{n}^{(i)}=f_{n}^{(i)}$, $r_{n}\equiv \infty$.
        \item  \label{eq:ALS_DR_CTF} (ALS with DR)  \quad $g_{n}^{(i)}=f_{n}^{(i)}$, $\sum_{n=1}^{\infty} r_{n}=\infty, \sum_{n=1}^{\infty} r_{n}^{2}<\infty$. 
        \item \label{eq:ALS_PR_CTF}(ALS with PR) \quad $g_{n}^{(i)}(U)=f_{n}^{(i)}(U) +\frac{\lambda}{2}\lVert U-U_{n-1}^{(i)} \rVert_{F}^{2}$, $\lambda>0$, and $r_{n}\equiv \infty$.  
        \item  \label{eq:ALS_DR_PR_CTF}(ALS with DR+PR) \quad $g_{n}^{(i)}(U)=f_{n}^{(i)}(U)        +\frac{\lambda}{2}\lVert U-U_{n-1}^{(i)} \rVert_{F}^{2}$, $\lambda>0$, $\sum_{n=1}^{\infty} r_{n}=\infty$, and $\sum_{n=1}^{\infty} r_{n}^{2}<\infty$. 
    \end{enumerate}
    Next, specialize \eqref{eq:NTF} into NCPD, where $\Theta^{(i)}=\R^{I_{i}\times R}_{\ge 0}$  and $\lambda_{i}=0$ for $i=1,\dots m$. There are two MU-type algorithms for NCPD, which can be derived similarly as in the NMF case (see Sec. \ref{sec:NMF}): 
    \begin{enumerate}[itemsep=0.1cm, itemindent=-0.5cm, resume, label={(\textbf{\alph*})}]
        \item  \label{eq:MU_NCPD} (MU)\quad  
       $\displaystyle  U_{n+1}^{(i)} \leftarrow U_{n}^{(i)} \odot (B^{T} \textup{unfold}(\mathbf{X}, i)) \oslash (B^{T} B  U_{n}^{(i)} )$. 

         \item   \label{eq:MUR_NCPD} (MUR) \quad  
        $\displaystyle
          \begin{cases}
            \tilde{U} \leftarrow U_{n}^{(i)} \lor \delta \\
           U_{n+1}^{(i)} \leftarrow \tilde{U} \odot \left( (B^{T} \textup{unfold}(\mathbf{X}, i)) + \rho \tilde{U} \right) \oslash ((B^{T}B + \rho \mathbf{I})  \tilde{U} ).
        \end{cases}
        $
    \end{enumerate}
    
    For many instances of BMM-DR for CTF listed above, we can apply our general result (Theorem \ref{thm:complexity}) to deduce their convergence and complexity. 
    In order to guarantee asymptotic convergence to stationary points and iteration complexity as stated in Theorem \ref{thm:complexity}, we need to assume that either diminishing radius or strongly convex surrogate is used. This rules out the vanilla ALS \ref{eq:ALS_CTF} for general CTF and MU \ref{eq:MU_NCPD} for NCPD. The following corollary holds for all the other options listed above. 
	
	\begin{corollary}[Convergence of BMM-DR for CTF]\label{cor:NTF}
	Let $\param_{n}=[U^{(1)}_{n},\dots,U^{(m)}_{n}]$, $n\ge 0$ be generated by \eqref{eq:BMM_DR_CTF} with one of the options \ref{eq:ALS_DR_CTF}, \ref{eq:ALS_PR_CTF}, \ref{eq:ALS_DR_PR_CTF} (for general convex constraints on factor matrices) or \ref{eq:MUR_NCPD} (for NCPD).  Then \textbf{\textup{(i)}}-\textbf{\textup{(iii)}} of Theorem \ref{thm:complexity} holds for $(\param_{n})_{n\ge 0}$. 
	\end{corollary}

\subsection{Block Projected Gradient Descent}

In the introduction, we discussed that the block projected gradient descent (BPGD) \eqref{eq:BProxLinear} is a special instance of BMM with smooth objectives where prox-linear surrogates are exactly minimized over convex constraint sets. 
 Therefore, our general result (Theorem \ref{thm:complexity}) implies that the BPGD algorithm \eqref{eq:BProxLinear} converges asymptotically to the stationary points (not only Nash equilibrium) and also has iteration complexity of \revision{$\widetilde{O}((1+\rho+\rho^{-1})\eps^{-2})$}, under the hypothesis of Theorem \ref{thm:complexity}. See Corollary \ref{cor:BPGD}.

	\begin{corollary}[Complexity of BPGD]\label{cor:BPGD}
        Let $(\param_{n})_{n\ge 0}$ be generated by the block projected gradient descent updates \eqref{eq:BProxLinear} with stepsize $1/\rho$. 
        Suppose \ref{assumption:A1} and \ref{assumption:A2} hold, where the convex constraint sets $\Theta^{(i)}$ are not necessarily bounded. If $\rho>L$, then \textbf{\textup{(i)}}-\textbf{\textup{(iii)}} of Theorem \ref{thm:complexity} holds for $(\param_{n})_{n\ge 0}$.  In particular, the iteration complexity for smooth nonconvex objectives with convex constraints is $\tilde{O}((1+\rho+\rho^{-1})\eps^{-2})$. 
	\end{corollary}
{\color{black}
Note that though we only state the convergence results for smooth objectives in Corollary \ref{cor:BPGD}, our general convergence and complexity results hold for nonsmooth nonconvex objectives with prox-linear updates as in \eqref{eq:BProxLinear} (when $p\neq 0$).


}

	\vspace{0.3cm}
	\section{Experimental Validation}
 \label{sec:exp}
\commHL{
In this section, we compare the performance of BMM-DR with other classical algorithms on different problems. The performance of the algorithms is evaluated by the \textit{relative reconstruction error} defined as $\textup{Error} = \|\mathbf{X} - \hat{\mathbf{X}}\|/\|\mathbf{X}\|$ where $\mathbf{X}$ is the given data matrix (tensor) and $\hat{\mathbf{X}}$ is the reconstructed data matrix (tensor). For algorithms using diminishing radius, we take $r_{n}=c'n^{-\beta}/\log n$ for $n\ge 1$, where $c'>0$ is a constant.
}

\subsection{BMM-DR for NMF}

We compare the performance of MU, BCD, BMM \ref{eq:ALS_PR_CTF}, BCD-DR, and BMM-DR \ref{eq:ALS_DR_PR_CTF} on the NMF task. The advantage of implementing DR becomes significant when matrices are ill-conditioned in NMF. In the experiments, the synthetic data matrix $\mathbf{X}\in \R^{100\times 50}_{\ge 0}$ is generated by the product of $W\in \R^{100\times 7}_{\ge 0}$ and $H\in \R^{7\times 50}_{\ge 0}$ where $W$ is set to have a large condition number of order $10^7$. This makes the block-marginal objective functions have very small strong convexity parameters, if not zero. 
Each algorithm is run 10 times with random initialization, and the averaged reconstruction error is computed. 
\begin{figure}[htb]
    \centering 
\begin{subfigure}{0.338\textwidth}
  \includegraphics[width=\linewidth]{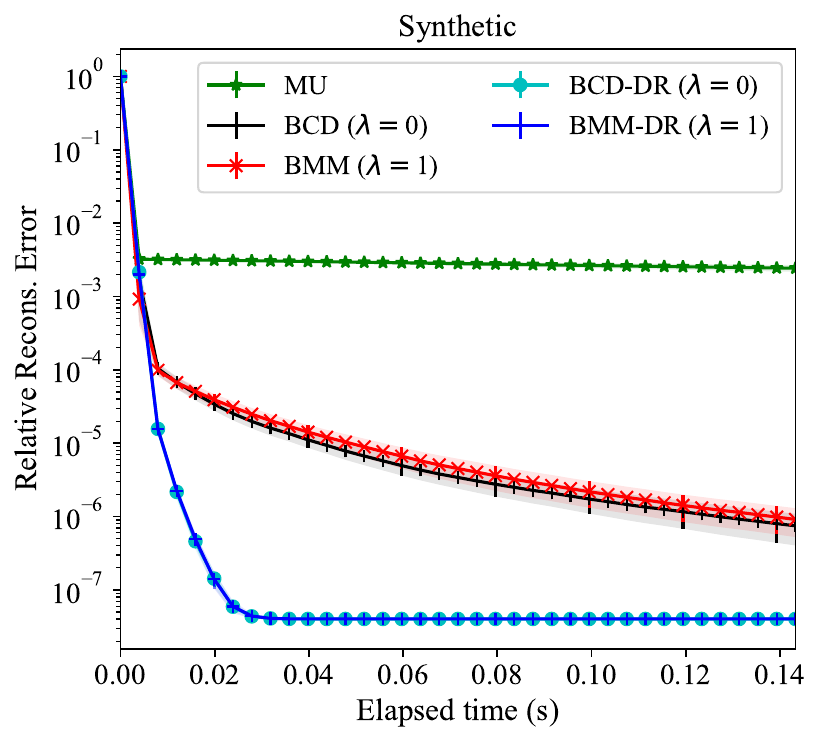}
\end{subfigure}\hfil 
\begin{subfigure}{0.33\textwidth}
  \includegraphics[width=\linewidth]{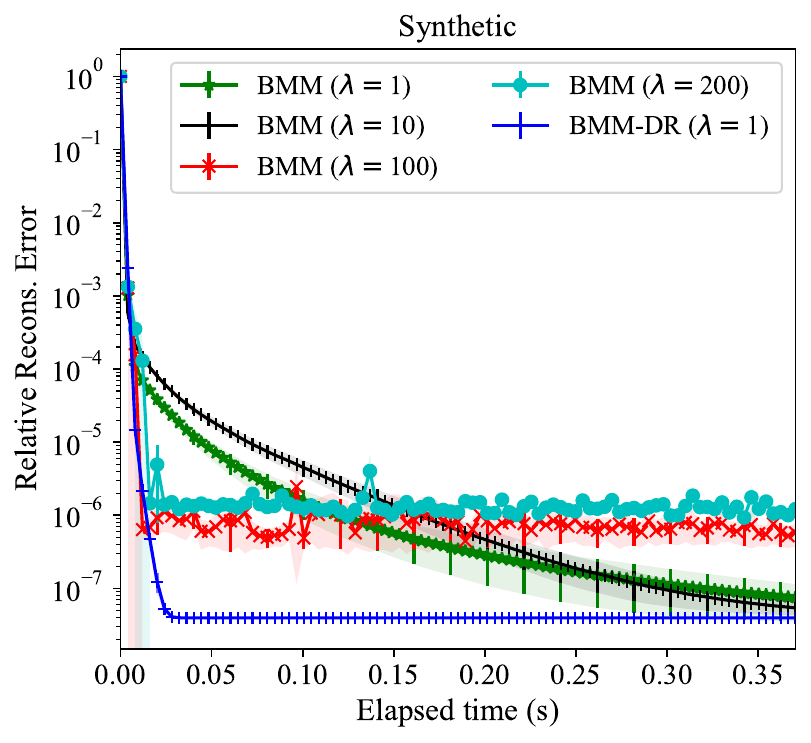}
\end{subfigure}\hfil 
\begin{subfigure}{0.33\textwidth}
  \includegraphics[width=\linewidth]{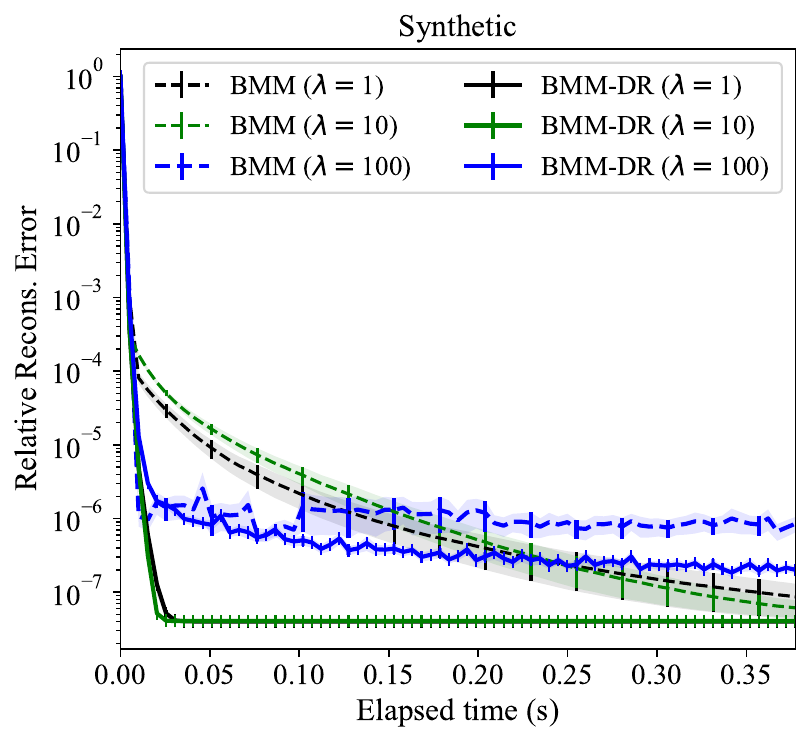}
\end{subfigure}
\vspace{-0.7cm}
\caption{ Comparison of BMM-DR with BMM on NMF. $\beta=0.5$ is the diminishing radius parameter used for all algorithms with DR and $\lambda$ is the proximal regularization parameter. The average relative reconstruction error with standard deviation is shown by the lines and shaded regions of respective colors.}
\vspace{-0.7cm}
\label{fig:NMF_DR}
\end{figure}

The first two plots in Figure \ref{fig:NMF_DR} show the practicality of BMM-DR with square-summable radii. 
Namely, in the left panel, BCD-DR and BMM-DR converge significantly faster to a more accurate solution than other standard algorithms without trust-region (MU, BCD, and BMM). In the middle panel, we compare a single instance of BMM-DR with various instances of BMM depending on the proximal regularization parameter $\lambda$. 
While excessively large proximal regularization ($\lambda=100,200$) seems to compromise the performance of BMM, all four instances of BMM are outperformed by BMM-DR with $\lambda=1$. 

Lastly, we discuss the experiments in the rightmost panel, where we make a direct comparison between BMM (dashed lines) and BMM-DR (solid lines) with the same surrogates is presented. Notice that both the surrogate's strong convexity and smoothness parameters, $\rho$ and $L_{g}$, increase additively by increasing $\lambda$. Hence from our complexity bound in Theorem \ref{thm:complexity}, it is expected to see worse performance using steep surrogates corresponding to excessively large $\lambda$ (e.g., $\lambda=100$) for both BMM and BMM-DR, but for flat surrogates with small $\lambda$ (e.g., $\lambda=1,10$), only the performance of BMM should be hindered, since the complexity bound involves involve the inverse of the strong-convexity parameter. The experiments in the right panel indeed confirm this theoretically expected behavior.

 \subsection{Nonnegative CP-decomposition}

 In this section, we compare the performance of our proposed BMM-DR algorithm \ref{eq:ALS_DR_CTF}-\ref{eq:ALS_DR_PR_CTF} and \ref{eq:MUR_NCPD} for the task of NCPD \eqref{eq:NTF} (with no $L_{1}$-regularization) against two most popular approaches in practice: 1) ALS \ref{eq:ALS_CTF}; and 2) Multiplicative Update (MU)\ref{eq:MU_NCPD}. We consider a synthetic tensor data $\X_{\text{synth}}$ and a real-world tensor data $\X_{\textup{Cifar10}}$.

For each data tensor $\mathbf{X}$ of shape $I_{1}\times I_{2} \times I_{3}$, we used the aforementioned algorithms to learn three loading matrices $U_{i}\in \R^{I_{i}\times R}$ for $i=1,2,3$. We set the number of columns $R=2$ for synthetic data and $R=10$ for the Cifar10 dataset. Each algorithm is run 10 times with independent random initial data in each case, and the plot shows the average relative reconstruction error 
with the standard deviation in shades.

 \begin{figure*}[h]
			\centering
			
			\begin{subfigure}[b]{0.24\textwidth}
				\centering
				\includegraphics[width=\textwidth]{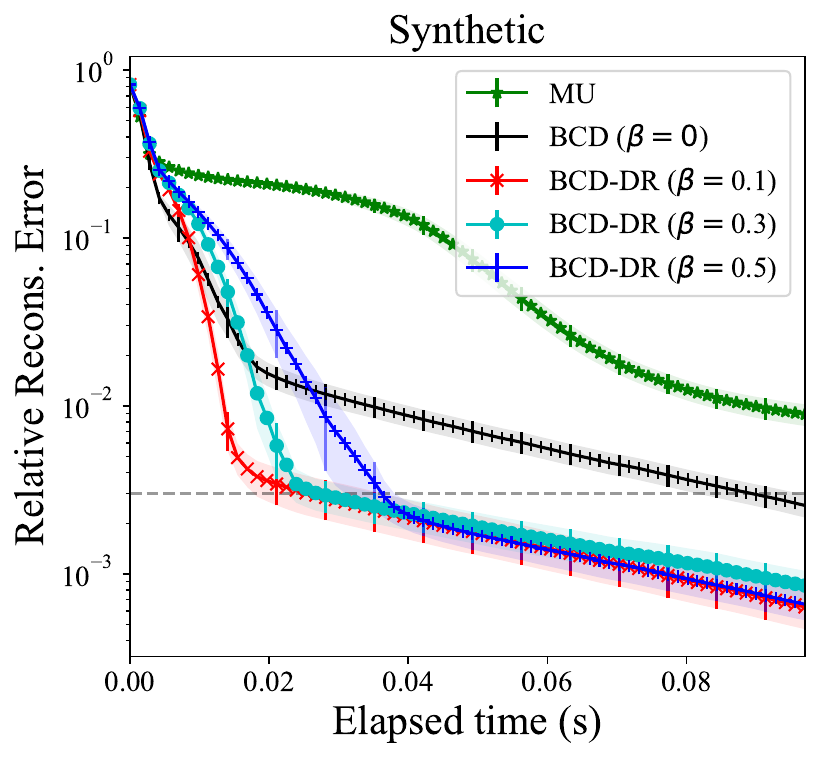}
				\label{fig:subfig1}
			\end{subfigure}
			\hfill
			\begin{subfigure}[b]{0.24\textwidth}
				\centering
				\includegraphics[width=\textwidth]{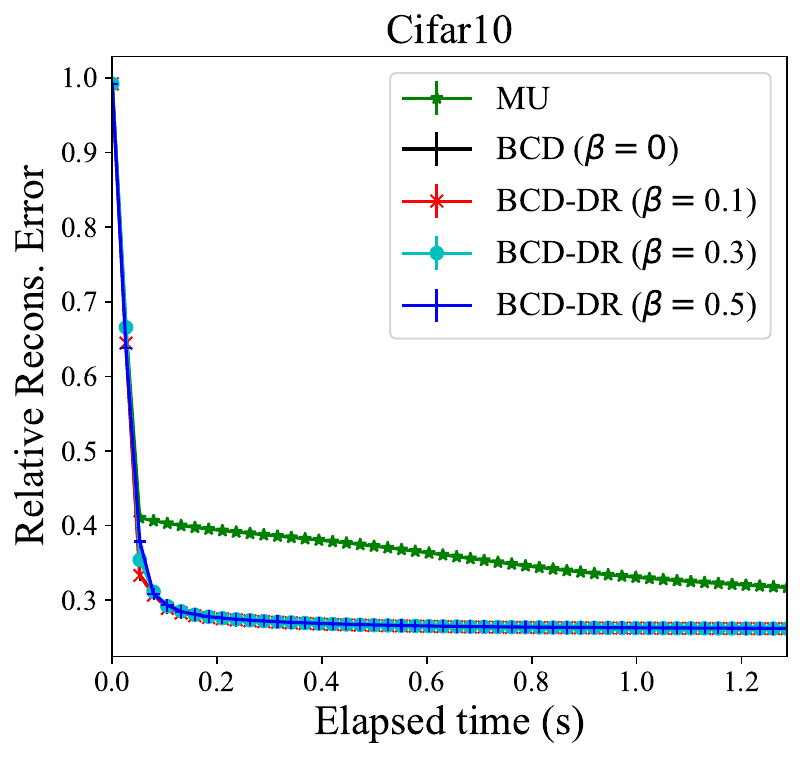}
				\label{fig:subfig2}
			\end{subfigure}
			\hfill
                \begin{subfigure}[b]{0.24\textwidth}
				\centering
				\includegraphics[width=\textwidth]{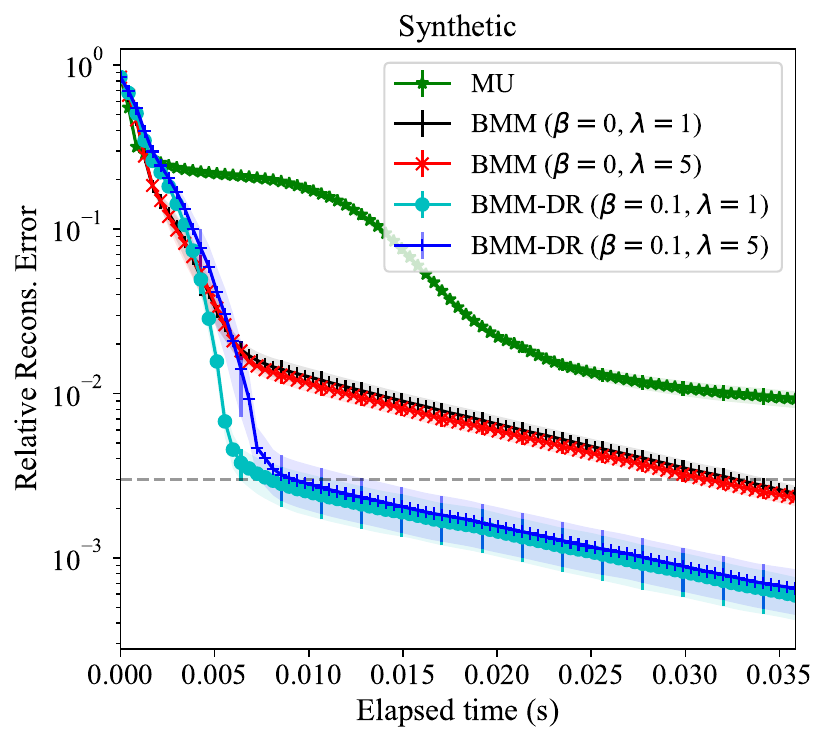}
				\label{fig:subfig2}
			\end{subfigure}
			\hfill
                \begin{subfigure}[b]{0.24\textwidth}
				\centering
				\includegraphics[width=\textwidth]{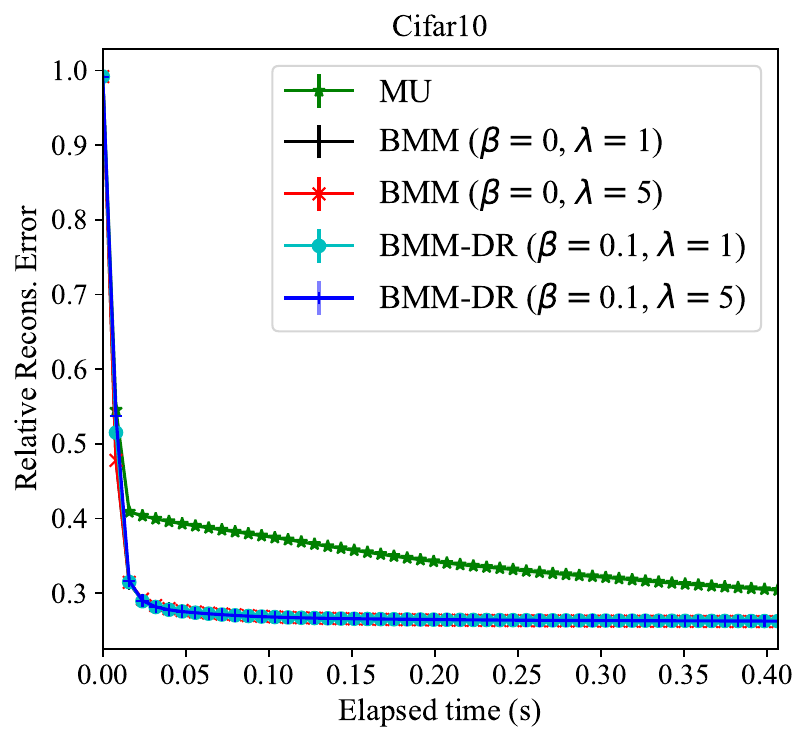}
				\label{fig:subfig2}
			\end{subfigure}
			\hfill

   \vspace{-0.6cm}
			\caption{Comparison of the performance of BMM-DR (Algorithm \eqref{eq:BMM_DR_highlevel}) and MUR against BCD and MU for the nonnegative CP-decomposition (NCPD) problem. BCD (equivalent to ALS) is implmented as \eqref{eq:BMM_DR_CTF}\ref{eq:ALS_CTF} with $r_{n}=\infty$ for $n\ge 1$. BCD-DR is implemented as \eqref{eq:BMM_DR_CTF}\ref{eq:ALS_DR_CTF} with $c'=\lVert \mathbf{X} \rVert_{F}/(1.5\times 10^{5})$ for synthetic data and $c'=\lVert \mathbf{X} \rVert_{F}/(3\times 10^{5})$ for Cifar10 data  ($\mathbf{X}$ denoting the data tensor). BMM \ref{eq:ALS_PR_CTF} is implemented with a proximal regularizer with parameter $\lambda$. BMM-DR \ref{eq:ALS_DR_PR_CTF} is implemented on top of BMM with diminishing radius parameter $\beta$ and the same $c'$ as BCD-DR. The average relative reconstruction error with standard deviation is shown by the solid lines and shaded regions of respective colors.
                }
                \vspace{-0.3cm}
			\label{fig:benchmark}
		\end{figure*}

For synthetic data, as shown in Figure \ref{fig:benchmark}, BCD-DR with proper diminishing radius parameters $\beta$ and $c'$ is significantly faster than MU and also the standard vanilla BCD in terms of elapsed time. 
Here we take $c'=\lVert \mathbf{X} \rVert_{F}/(1.5\times 10^{5})$ where $\mathbf{X}$ denote the synthetic data tensor, and $1.5\times 10^{5}$ is the number of elements in the tensor. 
BCD-DR with $\beta=0.1$ attains its best performance. In the third plot, a direct comparison between BMM and BMM-DR with the same surrogates is shown. One can observe that while the effect of different proximal parameters $\rho$ is negligible, applying diminishing radius improves the performance of BMM with the same surrogates.

\hspace{-0.3em}For the Cifar 10 data set, the same experiments are conducted with $c'=\lVert \mathbf{X} \rVert_{F}/(3\times 10^{5})$. All BCD-DR and BMM-DR outperform MU in terms of elapsed time and demonstrate a comparable convergence rate to the vanilla BCD. Diminishing radius (DR) does not accelerate the convergence as in the synthetic data case. 
In fact, the acceleration from DR becomes significant when the relative reconstruction error is of order $10^{-2}$. However, decomposing real-world tensors to loading matrices with such a small relative reconstruction error may not be possible. Hence, the acceleration from DR is not observed in the Cifar 10 data set case.

\subsection{Comparison between MU and MUR for NMF}
\label{sec:MUR_numerics}

In this section, we compare the performance of our MUR \eqref{eq:MUPR_NMF} for the task of NMF against the original MU \eqref{eq:MU_NMF}. We consider synthetic dense data $\mathbf{X}_{\textup{synth}}$, synthetic sparse data $\mathbf{X}_{\textup{synth-sp}}$ with $20\%$ nonzero elements and real-world data $\mathbf{X}_{\textup{MNIST}}$. 

\begin{figure}[htb]
    \centering 
\begin{subfigure}{0.335\textwidth}
  \includegraphics[width=\linewidth]{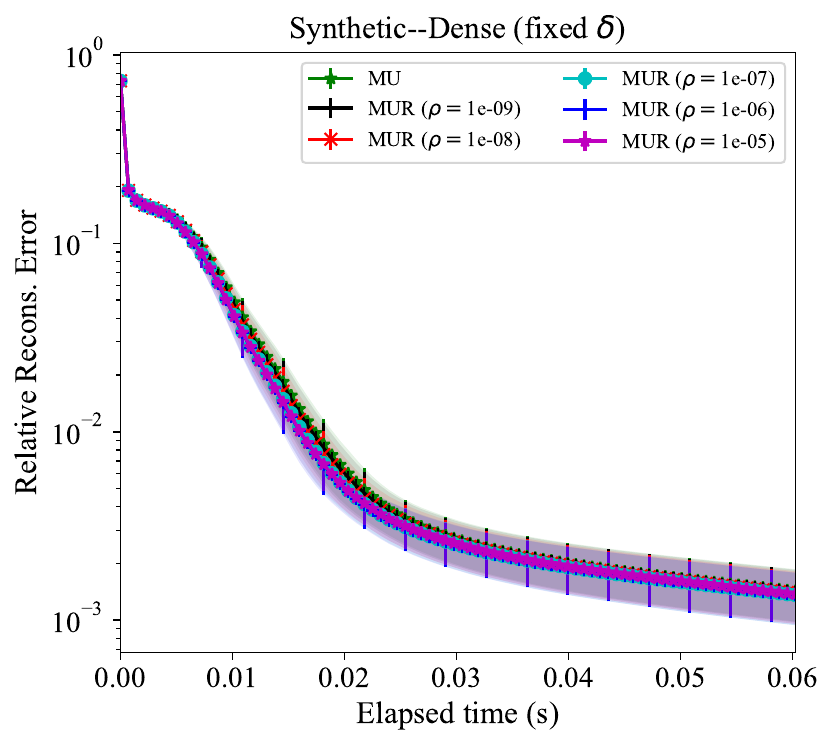}
  \label{fig:1}
\end{subfigure}\hfil 
\begin{subfigure}{0.333\textwidth}
  \includegraphics[width=\linewidth]{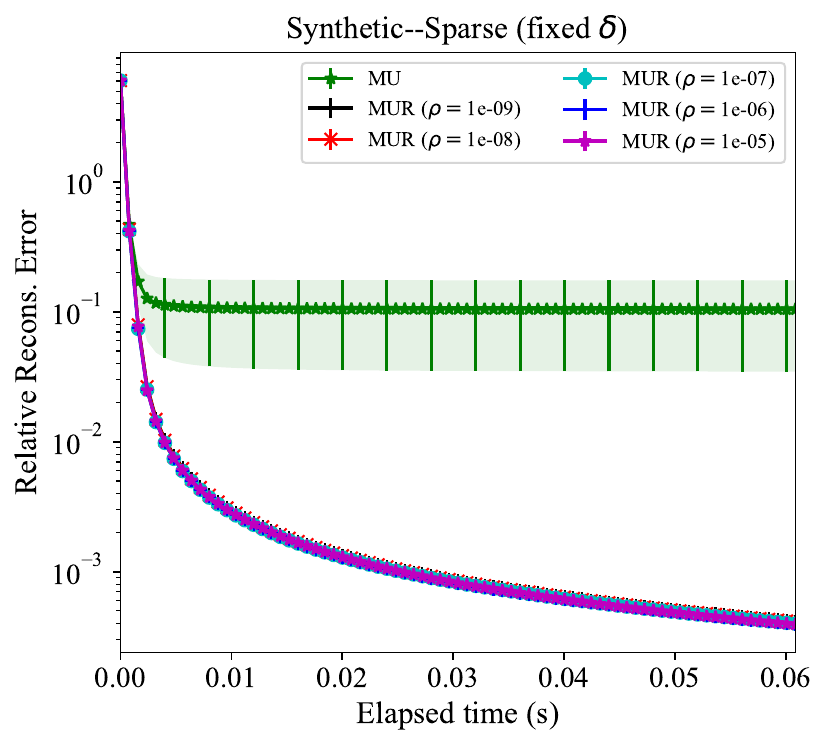}
  \label{fig:2}
\end{subfigure}\hfil 
\begin{subfigure}{0.33\textwidth}
  \includegraphics[width=\linewidth]{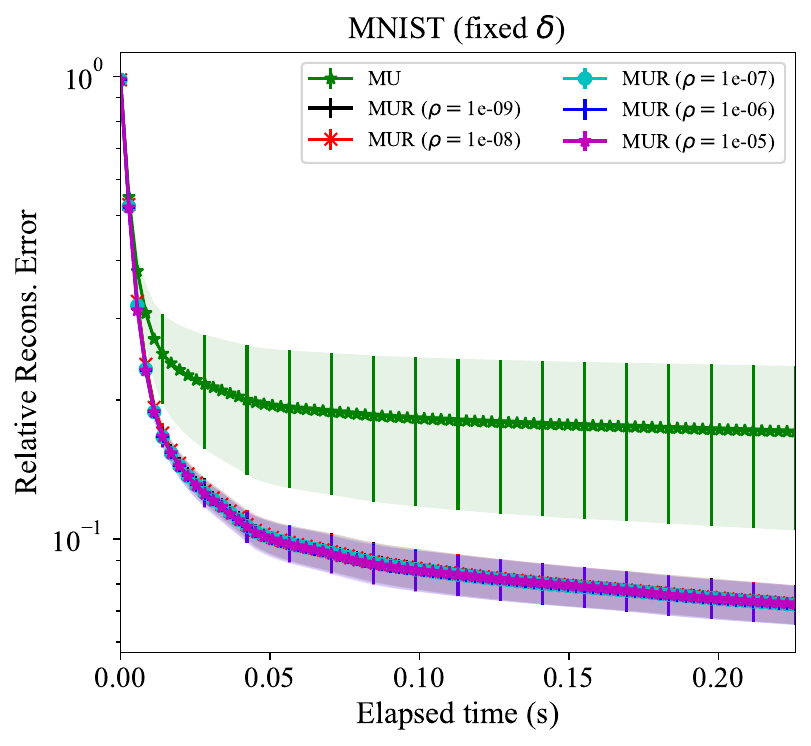}
  \label{fig:3}
\end{subfigure}

\vspace{-1.4em}
\begin{subfigure}{0.33\textwidth}
  \includegraphics[width=\linewidth]{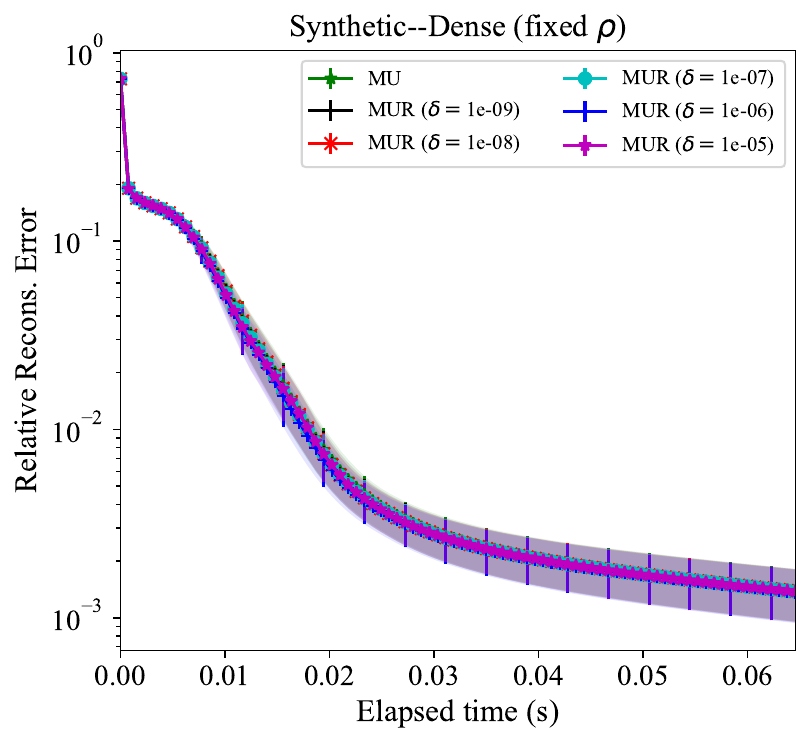}
  \label{fig:4}
\end{subfigure}\hfil 
\begin{subfigure}{0.33\textwidth}
  \includegraphics[width=\linewidth]{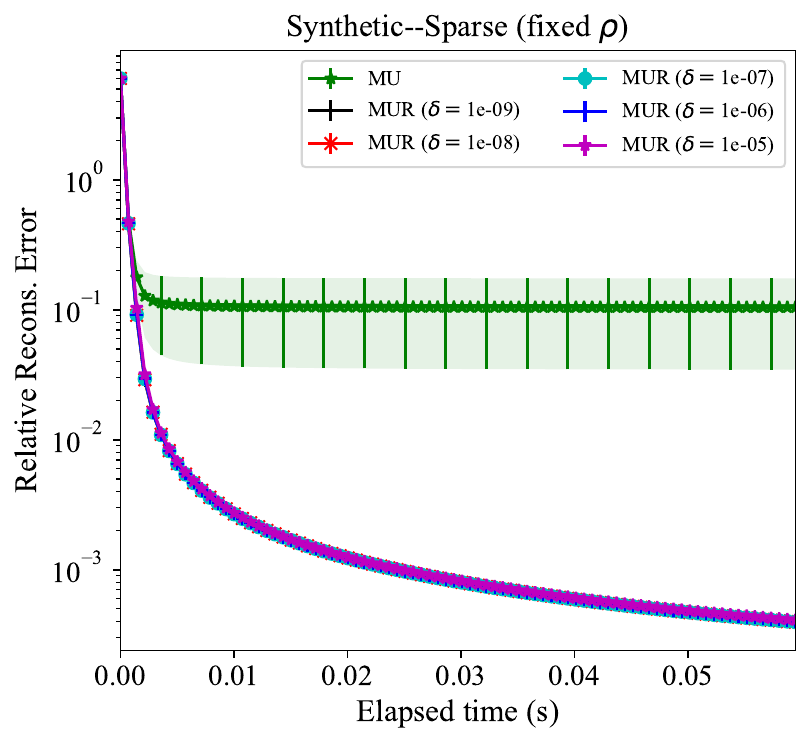}
  \label{fig:5}
\end{subfigure}\hfil 
\begin{subfigure}{0.33\textwidth}
  \includegraphics[width=\linewidth]{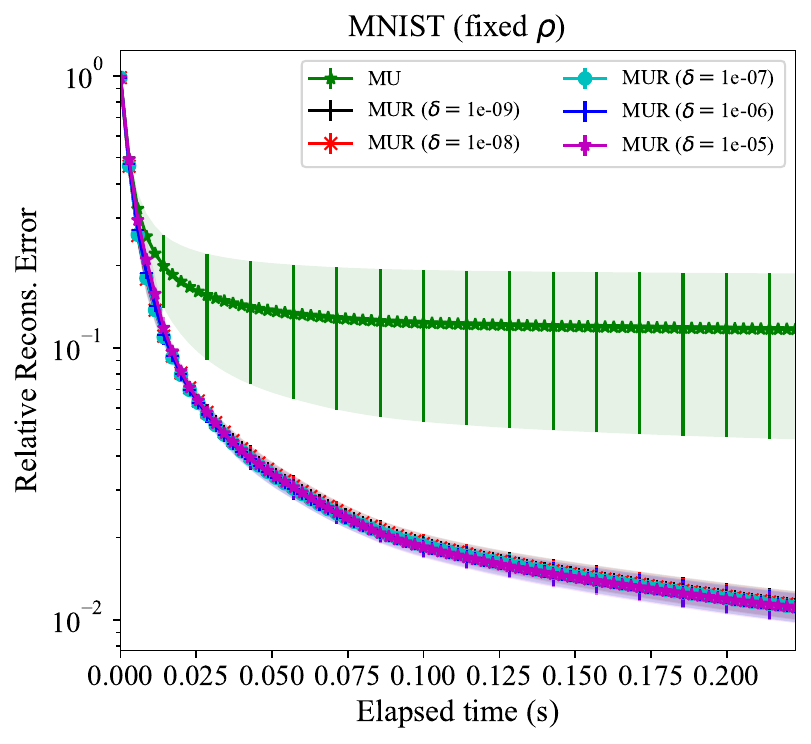}
  \label{fig:6}
\end{subfigure}
\vspace{-0.7cm}
\caption{Comparison of the performance of MUR  for the nonnegative matrix factorization (NMF) problem against MU. For MUR in the first (second) row, $\delta$ ($\rho$) is fixed as $10^{-8}$. The number of columns of loading matrices is set to be $r=2$ for synthetic data and $r=15$ for MNIST data. The average relative reconstruction error with standard deviation is shown by the solid lines and shaded regions of respective colors.}
\vspace{-0.7cm}
\label{fig:NMF}
\end{figure}

In numerical experiments, we use MU and MUR to learn nonnegative matrices $W \in \R^{100\times 2}_{\ge 0}$ and $H \in \R^{2\times 50}_{\ge 0}$ with synthetic data, and $W \in \R^{28\times 15}_{\ge 0}$ and $H \in \R^{15\times 28}_{\ge 0}$ with MNIST data. MU and MUR with various threshold parameters $\delta$ and regularization parameter $\rho$ are run $100$ times in each experiment with random initial data. The average relative reconstruction error with standard deviation is computed and shown in Figure \ref{fig:NMF} with solid lines and shaded regions. As shown in Figure \ref{fig:NMF}, in the synthetic data case without the sparsity feature, MU and MUR show similar convergence speeds. In the sparse data case, for both synthetic and MNIST data, MUR with various parameters significantly outperforms MU. In fact, writing MU in gradient descent form \cite{lin2007convergence}, the step size of gradient descent updates involves $H$ and $W$ in both the denominator and numerator, whose elements could possibly be zero especially when the data is sparse. A zero numerator of the step size results in no change during updates, while a zero denominator would lead to blow-up issues. These challenges contribute to the comparatively poorer performance of MU when compared to MUR. 

	\section*{Acknowledgements}
	HL is partially supported by NSF DMS-2206296. YL is supported by NSF Award DMS-2023239. The authors appreciate valuable comments from Stephen Wright.

		\vspace{0.3cm}
		\small{
\newcommand{\etalchar}[1]{$^{#1}$}
\providecommand{\bysame}{\leavevmode\hbox to3em{\hrulefill}\thinspace}
\providecommand{\MR}{\relax\ifhmode\unskip\space\fi MR }
\providecommand{\MRhref}[2]{%
  \href{http://www.ams.org/mathscinet-getitem?mr=#1}{#2}
}
\providecommand{\href}[2]{#2}

		\vspace{0.5cm}
		\addresseshere

\end{document}